\newtheorem{theorem}{Theorem}[section]
\newtheorem{proposition}[theorem]{Proposition}
\newtheorem{lemma}[theorem]{Lemma}
\newtheorem{corollary}[theorem]{Corollary}
\newtheorem{conjecture}[theorem]{Conjecture}
\begin{document}
	
\title{
	On the minimum density of monotone subwords
}
	
\author{
	Raphael Yuster
	\thanks{Department of Mathematics, University of Haifa, Haifa 3498838, Israel. Email: raphael.yuster@gmail.com\,.}
}
	
\date{}
	
\maketitle
	
\setcounter{page}{1}
	
\begin{abstract}
	We consider the asymptotic minimum density $f(s,k)$ of monotone $k$-subwords of words over a totally ordered alphabet of size $s$. The unrestricted alphabet case, $f(\infty,k)$, is well-studied, known for $f(\infty,3)$ and $f(\infty,4)$, and, in particular, conjectured to be rational for all $k$. Here we determine $f(2,k)$ for all $k$ and determine $f(3,3)$, which is already irrational. 
	We describe an explicit construction for all $s$ which is conjectured to yield $f(s,3)$.
	Using our construction and flag algebra, we determine $f(4,3),f(5,3),f(6,3)$ up to $10^{-3}$
	yet argue that flag algebra, regardless of computational power, cannot determine $f(5,3)$ precisely.
	Finally, we prove that for every fixed $k \ge 3$, the gap between $f(s,k)$ and  $f(\infty,k)$ is $\Theta(\frac{1}{s})$.
%\vspace*{3mm}
%\noindent
%{\bf AMS subject classifications:} 05C35\\
%{\bf Keywords:} word; monotone; density

\end{abstract}

\section{Introduction}

Let ${\mathcal S}$ be a totally ordered set. An {\em $n$-word} over the {\em alphabet} ${\mathcal S}$ is a sequence $w : [n] \rightarrow {\mathcal S}$. The sequence elements are called {\em letters}.
A subsequence (hereafter, {\em subword}) of a word is {\em monotone} if it is monotone non-increasing or monotone non-decreasing.

The study of patterns (such as monotone patterns) in words is a classical problem in combinatorics;
see \cite{HM-2009} for a survey of central results in the area. The special case of {\em permutations} which, in the realm of patterns, is the case where all letters in the word are distinct, dates back to the classical theorem of Erd\H{o}s and Szekeres \cite{ES-1935}, who proved that a permutation of length $(k-1)^2+1$ has a monotone $k$-length subword. Hence, one expects that as $n$ grows, the number of monotone $k$-subwords grows accordingly. Nevertheless, it is usually difficult to determine the minimum amount of monotone patterns.
In fact, it was not long ago that Samotij and Sudakov \cite{SS-2015} determined this minimum for permutations in the special case where $n \le k^2+ck^{3/2}/\log k$ for some absolute constant $c$ and $k$ sufficiently large.
As pattern quantification problems are mostly hard to enumerate precisely, it is of interest to study asymptotic density, whether minimum density (e.g., \cite{BHL+-2015}), maximum density (e.g., \cite{SS-2018}) or the random word setting (e.g., \cite{CR-2016}). As we are interested in monotone subwords, we consider their
minimum asymptotic density, as formally defined below.

Hereafter, we shall assume that ${\mathcal S}$ is finite, and set, without loss of generality, ${\mathcal S}=\Sigma_s :=\{0,\ldots,s-1\}$. Let $m(k,w)$ denote the number of monotone $k$-subwords of a word $w$ and let $f(s,k,n)$ denote the minimum of $m(k,w)/\binom{n}{k}$ taken over all $w \in (\Sigma_s)^n$.
Notice that $g(k,n) := f(n,k,n)$ is the minimum density of monotone $k$-subwords of a permutation of $[n]$.
Let $f(s,k)$ (respectively, $g(k)$) denote the limit of $f(s,k,n)$ (respectively, $g(k,n)$) as $n$ goes to infinity. The limit clearly exists as the sequences $f(s,k,n)$ and $g(k,n)$ are monotone non-decreasing with $n$.
It is easy to see that for every fixed $k$, $f(s,k)$ is a non-increasing function of $s$ and its limit is $g(k)$.

The problem of determining $g(k,n)$ was initiated by Atkinson, Albert and Holton (see \cite{myers-2002}).
Myers described a construction upper-bounding $g(k,n)$ for every $k,n$ and conjectured that it yields
the exact bound, and hence the exact value of $g(k)$. Myers proved his conjecture for $k=3$,
determining $g(3,n)$ for every $n$. He also gave a simple proof that $g(3)=\frac{1}{4}$, based on
Goodman's classical formula \cite{goodman-1959}. As for the general case, Myers' conjecture is that $g(k)=\frac{1}{(k-1)^{k-1}}$. This value was shown to hold for the special class of {\em layered permutations} by de Oliveira Bastos, and Coregliano \cite{OC-2016}. The case $g(4)=\frac{1}{27}$ was proved
by Balogh, Hu, Lidick{\`y}, Pikhurko, Udvari, and Volec \cite{BHL+-2015} with a sophisticated use of flag algebra, but otherwise the general conjecture of Myers regarding $g(k)$ is wide open. Here we address $f(s,k)$ and, as we shall see, the problem becomes
challenging even for $k=3$ and even for small size alphabets, unlike the case $k=3$ of permutations 
which, recall, is fairly simple.

Let us start with the smallest size nontrivial alphabet, $s=2$. In this case we can determine
$f(2,k)$ for all $k$, but even this requires some effort, as the proof of the theorem below suggests.
\begin{theorem}\label{t:exact-2}
	$f(2,k) = \frac{k}{2^{k-1}}$.
\end{theorem}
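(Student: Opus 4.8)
The plan is to reduce everything to a one-variable convexity estimate by first establishing an \emph{exact} formula for $m(k,w)$ on binary words. For $w\in(\Sigma_2)^n$ and a position $v$, let $d(v)$ denote the number of positions $u<v$ with $w_u=w_v$ plus the number of positions $u>v$ with $w_u\ne w_v$. I claim
\[
m(k,w)=\sum_{v=1}^{n}\binom{d(v)}{k-1}.
\]
Over a binary alphabet a $k$-subword is monotone iff it is non-decreasing or non-increasing, and the subwords counted by both are exactly the constant ones. A non-decreasing $k$-subword containing at least one $0$ is uniquely determined by the position $p$ of its rightmost $0$ together with a choice of $k-1$ further positions, each of which is either a $0$ to the left of $p$ or a $1$ to the right of $p$; since there are $\binom{d(p)}{k-1}$ such choices when $w_p=0$, summing over all $p$ with $w_p=0$ and adding $\binom{o}{k}$ for the all-$1$ subwords counts all non-decreasing $k$-subwords (here $o$ is the number of $1$'s). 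The symmetric count handles the non-increasing $k$-subwords, and subtracting the $\binom{z}{k}+\binom{o}{k}$ constant subwords (with $z$ the number of $0$'s) makes the $\binom z k$ and $\binom o k$ terms cancel, leaving the displayed identity.

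Two easy facts then finish the proof. First, $\sum_{v=1}^{n}d(v)=\binom n2$: the ``equal'' contributions sum to $\binom z2+\binom o2$ (pairs with the same letter) and the ``distinct'' contributions to $\binom n2-\binom z2-\binom o2$ (pairs with different letters). Second, $h(x):=\binom{x}{k-1}$ is discretely convex on $\mathbb Z_{\ge0}$, since $h(x+1)-2h(x)+h(x-1)=\binom{x-1}{k-3}\ge0$ for every integer $x\ge1$. Hence, over all non-negative integer tuples $(d(v))_{v=1}^n$ with sum $\binom n2$, the quantity $\sum_v h(d(v))$ is minimised by the balanced tuple (entries differing by at most $1$): for $n=2z$ this consists of $z$ entries equal to $z$ and $z$ entries equal to $z-1$, giving $m(k,w)\ge z\binom{z}{k-1}+z\binom{z-1}{k-1}$, with the analogous estimate for odd $n$. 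The alternating word $0101\cdots$ realises the balanced tuple exactly, so this lower bound is attained for every $n$; a routine asymptotic evaluation of $\bigl(z\binom{z}{k-1}+z\binom{z-1}{k-1}\bigr)/\binom{2z}{k}$ as $z\to\infty$ yields $k/2^{k-1}$, which proves the theorem (and, as a byproduct, determines $f(2,k,n)$ exactly for all $n$).

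The heart of the argument — and the part requiring care — is the identity for $m(k,w)$: correctly pinning down the statistic $d(v)$ and matching the ``split at the extremal $0$ (or $1$)'' decomposition against a single binomial coefficient. Once this is in hand the remaining steps are routine, with one small caveat: $\binom{x}{k-1}$ is \emph{not} convex as a function of a real variable once $k\ge4$, so it is essential to use its convexity only over the integers, which is precisely the content of the identity $h(x+1)-2h(x)+h(x-1)=\binom{x-1}{k-3}$.
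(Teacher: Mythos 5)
Your proof is correct, and it takes a genuinely different route from the paper. I checked the key identity $m(k,w)=\sum_{v}\binom{d(v)}{k-1}$: the ``rightmost $0$'' (resp.\ ``rightmost $1$'') decomposition of non-decreasing (resp.\ non-increasing) $k$-subwords is exact, the constant subwords cancel as you say, and small cases (e.g.\ $w=0011$, $w=0101$ with $k=3$) confirm it; the count $\sum_v d(v)=\binom n2$ and the discrete convexity $h(x+1)-2h(x)+h(x-1)=\binom{x-1}{k-3}$ are also right, and you are correct that only integer convexity is available (and needed) when $k\ge 4$. The paper instead proves the upper bound via the alternating word and the lower bound by a local exchange argument: it shows that for odd $n$ any word minimizing $m(k,w)$ must be alternating, via a sequence of flips of adjacent letters each of which is shown to strictly decrease the count. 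Your approach buys a clean closed-form lower bound $\sum_v\binom{d(v)}{k-1}\ge$ (balanced tuple value) that is tight for \emph{every} $n$, both parities, with no case analysis, and it immediately yields $f(2,k,n)$ exactly; its one limitation is that the statistic $d(v)$ exploits the binary alphabet in an essential way (knowing a letter differs from $w_v$ determines it), so it does not extend to larger alphabets. The paper's exchange technique, while more laborious here, also pins down uniqueness of the odd-length minimizer and is reused almost verbatim in the ternary case (Theorem \ref{t:exact-3}), which is presumably why the authors chose it. One minor point to make explicit if you write this up: the relaxation step (minimizing over all nonnegative integer tuples summing to $\binom n2$, ignoring which tuples are realizable) is legitimate precisely because the alternating word realizes the balanced tuple, so the relaxed bound is attained; you use this implicitly and it deserves a sentence.
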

As can be seen from the proof of Theorem \ref{t:exact-2}, one can determine $f(2,k,n)$ exactly for all $n$;
the minimum is attained by an alternating binary word (and, when $n$ is odd, {\em only} by an alternating
binary word).

We next turn to larger alphabets. In the next two theorems we consider the case $k=3$ (monotone triples),
for which we can get precise and almost precise results for small $s$, and conjecture the precise result
for every $s$. Our next theorem determines the smallest case, $f(3,3)$ which turns out to be irrational (recall that in the permutation case, all conjectured values are rationals).
\begin{theorem}\label{t:exact-3}
	$f(3,3) = 2-\sqrt{2}=0.5857...$.
\end{theorem}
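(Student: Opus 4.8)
The plan is to match an explicit construction (upper bound) with a structural analysis of near‑extremal words (lower bound).

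\emph{Upper bound.} Let $\mu=2-\sqrt 2$, $\gamma=\tfrac12(1-\mu)=\tfrac12(\sqrt 2-1)$, and let $w_n$ be the concatenation of three blocks $L,M,R$: about $\gamma n$ copies of the letter $1$, then about $\mu n/2$ copies of the length‑two block $02$, then about $\gamma n$ copies of $1$ (rounding is immaterial for the limit). Classify a uniformly random triple of positions by how it distributes over $L,M,R$. One checks: (i) three positions in $M$ have essentially independent uniform $\{0,2\}$ letters, hence form a non‑monotone triple with probability $\tfrac14$ (among the eight equally likely patterns only $020,202$ are non‑monotone); (ii) a triple meeting each of $L,M,R$ once reads $1X1$ with $X\in\{0,2\}$ and is \emph{always} non‑monotone; (iii) a triple with one position in $L$ (resp.\ $R$) and two in $M$ is non‑monotone with probability $\tfrac12$; (iv) for all remaining types the middle position lies in $L$ (resp.\ $R$), hence so does the first (resp.\ third) position, so the middle letter $1$ repeats among the outer letters and the triple is monotone. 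Summing the contributions and using $2\gamma+\mu=1$, the asymptotic density of non‑monotone triples of $w_n$ equals $\tfrac14\mu^3+6\gamma^2\mu+3\gamma\mu^2=\tfrac32\mu(1-\mu)+\tfrac14\mu^3$. This function of $\mu\in[0,1]$ is concave and its unique stationary point is the root $\mu=2-\sqrt 2$ of $\mu^2-4\mu+2=0$, where it equals $\sqrt 2-1$; therefore $m(3,w_n)/\binom{n}{3}\to 1-(\sqrt 2-1)=2-\sqrt 2$, giving $f(3,3)\le 2-\sqrt 2$.

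\emph{Lower bound --- reduction.} A triple $i<j<k$ of $w$ is not monotone exactly when $w_j$ is strictly larger, or strictly smaller, than both $w_i,w_k$; as each monotone triple is counted once, $m(3,w)=\binom{n}{3}-\mathrm{nm}(w)$, with $\mathrm{nm}(w)$ the number of non‑monotone triples, so it suffices to show $\mathrm{nm}(w)\le(\sqrt 2-1+o(1))\binom{n}{3}$ for all $w\in(\Sigma_3)^n$. For $\tau\in\{1,2\}$ let $w^{(\tau)}\in(\Sigma_2)^n$ be given by $w^{(\tau)}_i=1$ if $w_i\ge\tau$ and $w^{(\tau)}_i=0$ otherwise. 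Partitioning the non‑monotone triples of $w$ by type (valley or peak, and middle letter) and tracing each type through the two projections yields the exact identity
\[
\mathrm{nm}(w)=\mathrm{nm}(w^{(1)})+\mathrm{nm}(w^{(2)})-A(w)-B(w),
\]
where $A(w)$ counts triples with $w_i=w_k=0,\ w_j=2$ and $B(w)$ counts triples with $w_i=w_k=2,\ w_j=0$. By Theorem~\ref{t:exact-2} (case $k=3$) each $\mathrm{nm}(w^{(\tau)})\le(\tfrac14+o(1))\binom{n}{3}$, so already $\mathrm{nm}(w)\le(\tfrac12+o(1))\binom{n}{3}$, i.e.\ $f(3,3)\ge\tfrac12$.

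\emph{Lower bound --- closing the gap.} To replace $\tfrac12$ by $2-\sqrt 2$ one must exploit the tension between $\mathrm{nm}(w^{(1)})+\mathrm{nm}(w^{(2)})$ being near $\tfrac12\binom{n}{3}$ --- which forces both projections near‑alternating and hence $w$ itself close to $0202\cdots$, barely using the letter $1$ --- and $A(w)+B(w)$ being small, which in that regime it is not. The systematic way to carry this through is to pass to limits of words: the limit object is a measurable $W\colon[0,1]\to\Delta$ with $\Delta=\{(a,b,c):a,b,c\ge 0,\ a+b+c=1\}$, subword densities are continuous and the limit space is compact, so $f(3,3)=1-\max_W\Psi(W)$. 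Writing $W(t)=(a(t),b(t),c(t))$ and conditioning on the middle coordinate $t$ --- so that the two outer integrals decouple --- with $P(t)=\int_0^t a$, $Q(t)=\int_t^1 a$, $R(t)=\int_0^t c$, $S(t)=\int_t^1 c$,
\[
\Psi(W)=6\int_0^1\!\Big[a(t)(t-P)(1-t-Q)+c(t)(t-R)(1-t-S)+b(t)\big(PQ+RS\big)\Big]\,dt .
\]
This is an optimal‑control problem with state $(P,Q,R,S)$ (dynamics $P'=a=-Q'$, $R'=c=-S'$) and control $(a,b,c)\in\Delta$, whose integrand is affine in the control; by the maximum principle the optimizer is bang‑bang among the pure letters $\delta_0,\delta_1,\delta_2$ except on singular arcs. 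The crux --- and the step I expect to be the main obstacle --- is the structural analysis of the optimizer: examining the switching functions (quadratics in $t$ coupled through the costates) together with the $0\leftrightarrow 2$ and order‑reversal symmetries should show that, up to symmetry, the optimum has a single singular arc, a central interval on the edge $\{(a,0,1-a)\}$ which a second‑order condition pins at $a=\tfrac12$ (a rapidly alternating $0/2$ word), flanked by one constant interval $(0,1,0)$ on each side --- exactly the limit of the $w_n$ above --- and optimizing its one length parameter returns $\mu^2-4\mu+2=0$ and $\max_W\Psi=\sqrt 2-1$, whence $f(3,3)=2-\sqrt 2$. The most robust way to make the structural step rigorous is probably a Hamilton--Jacobi \emph{dual certificate}: a piecewise‑polynomial $\Phi$ of the state variables, vanishing at $t=1$, dominating the associated Bellman operator pointwise with equality along the construction, so that $\Psi(W)\le 6\,\Phi(0,\cdot)$ for every $W$; guessing $\Phi$ --- it has three pieces matching the three phases --- and verifying the ensuing polynomial inequalities is the heart of the proof, while the limit reduction, the symmetry reductions, and the one‑variable optimization are routine.
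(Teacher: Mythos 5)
Your upper bound is correct and is essentially the paper's own construction: the word $1^{\gamma n}(02)^{\mu n/2}1^{\gamma n}$ with $\mu=2-\sqrt2$ is exactly the ``proper form'' word $1^y0202\ldots 201^y$ used in the paper, and your count $\tfrac14\mu^3+6\gamma^2\mu+3\gamma\mu^2$ agrees with the paper's polynomial $\tfrac34-\tfrac{3\alpha}{2}+3\alpha^2+2\alpha^3$ after the substitution $\mu=1-2\alpha$. Your projection identity $\mathrm{nm}(w)=\mathrm{nm}(w^{(1)})+\mathrm{nm}(w^{(2)})-A(w)-B(w)$ is also correct (monotone maps of the alphabet preserve monotone triples, and only the patterns $020$ and $202$ are counted twice), and together with Theorem \ref{t:exact-2} it does give $f(3,3)\ge\tfrac12$. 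But $\tfrac12<2-\sqrt2$, so this is not the theorem.

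The genuine gap is the matching lower bound $f(3,3)\ge 2-\sqrt2$, which in your write-up is only a plan. Everything that would actually force the extremal structure is deferred: you assert that a maximum-principle/switching-function analysis ``should show'' the optimizer is a single central $\{0,2\}$-arc at $a=\tfrac12$ flanked by two pure-$1$ intervals, and you propose to certify this by a three-piece Hamilton--Jacobi function $\Phi$ that you would still have to guess and whose pointwise inequalities you would still have to verify. That structural statement is precisely the content of the theorem's hard direction, so as written the lower bound rests on an unproved conjecture about the optimizer of $\Psi$; nothing in the proposal rules out, say, optimizers mixing letter $1$ with $0/2$ on interior intervals, or several singular arcs. (The limit/compactness reduction is fine in spirit, since for triples the count depends only on the cumulative letter profiles, but it too is only sketched.) The paper closes exactly this gap by elementary finite exchange arguments: Lemma \ref{l:1} (flipping a $0$ with a $2$, or the two extreme $0$'s) shows the $\{0,2\}$-subword of a minimizer is alternating, and Lemma \ref{l:2} (moving an interior $1$ to the border) shows all $1$'s sit in two equal blocks at the ends; after that the problem reduces to the one-variable cubic in \eqref{e:h3}, whose minimum is $2-\sqrt2$. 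So either carry out and verify your certificate $\Phi$ (the heart of the proof, currently missing), or replace that step by swap arguments of the paper's type; until then the proposal proves only $\tfrac12\le f(3,3)\le 2-\sqrt2$.
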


In Section 3, we present, for every $s$, an explicit construction of a sequence $S_s= \{w^n\}_{n=1}^{\infty}$ of words over $\Sigma_s$ where $w^n$ is an $n$-word, which is conjectured to be asymptotically optimal, in the sense that
$m(3,w^n)/\binom{n}{3}$ is conjectured to approach $f(s,3)$ as $n$ goes to infinity.
Our construction is accompanied with an explicitly defined multivariate degree $3$ polynomial $h_s$ with
$\lfloor (s-1)/2 \rfloor$ variables.
Let $q(s)$ denote the minimum of $h_s$ in the positive orthant of the closed halfspace defined by summing the variables to $\frac{1}{2}$. It is then conjectured (see Conjecture \ref{conj:1})
that $q(s)=f(s,3)$ and proved that $q(s) \ge f(s,3)$ ($q(3)$ coincides with $f(3,3)$ of Theorem \ref{t:exact-3}).
While we can obviously compute $q(s)$ to arbitrary precision for every $s$, we have done so
analytically for all $4 \le s \le 7$ as can be seen from the upper bound column of Table \ref{table:bounds}.
But how plausible is our conjecture in the sense of a supporting lower bound? It is exact for $s=3$, and we prove that it is {\em very close} to $f(s,3)$ for $s \in \{4,5,6\}$. To this end, we shall use the flag algebra framework of Razborov \cite{razborov-2007}. Our approach follows along the lines of \cite{BHL+-2015}, with two major differences: The flag algebraic objects that we work with are, what we call, {\em word graphs},
which are certain edge colorings of complete graphs that capture monotonicity in words. This is somewhat different than \cite{BHL+-2015} whose objects are permutation graphs. While a single word graph may
correspond to many words (hence, considerably reducing the search space instead of working directly with words),
there are still many distinct word graphs for small $s$ and $n$, as opposed to permutation graphs.
For example, while there are only $776$ permutation graphs for $n=7$, there are $12712$ word graphs
already for $n=7$ and $s=6$ (see Table \ref{table:gsl}). This makes our semidefinite programs rather large.
On the other hand, while \cite{BHL+-2015} obtain an exact rational result, we only claim a lower bound - nor can we ask for an exact result yielding $q(s)$ using a 
flag algebra generated sdp - because $q(s)$ is an integer polynomial evaluated at a real algebraic root of another high degree polynomial (in fact $q(5)$ is determined by an algebraic number of degree $4$, see Subsection \ref{subsec:upper}).
We thus argue that, regardless of computational power, flag algebra just by itself cannot be used to
determine $f(s,5)$. 
Nevertheless, using flag algebra, we obtain the lower bounds for $f(s,3)$ when $s \in \{4,5,6\}$ given in the lower bound column of Table \ref{table:bounds}. As can be seen, these lower bounds
differ from $q(s)$ (hence $f(s,3)$) by at most $10^{-3}$. Our findings are summarized in the following theorem.
\begin{theorem}\label{t:almost-exact}
	The following holds for $f(s,3)$ where $4 \le s \le 7$.
	\begin{table}[h]
		\centering
		\begin{tabular}{c||c|c|c}
			$s$ &  upper bound (conjectured tight) & lower bound & gap smaller than\\
			\hline
			$4$ & $0.5133...$ & $0.5123...$ & $0.001$\\
			\hline
			$5$ & $0.4610...$ & $0.4604...$ & $0.001$\\
			\hline
			$6$ & $0.4288...$ & $0.4280...$ & $0.001$\\
			\hline
			$7$ & $0.4033...$ & &\\
		\end{tabular}
		\caption{Upper and lower bounds for $f(s,3)$. All upper bounds are conjectured to be optimal.}
		\label{table:bounds} 
	\end{table} 
\end{theorem}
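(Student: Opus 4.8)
The plan is to establish the two columns of Table~\ref{table:bounds} by independent arguments and then simply subtract.

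\textbf{Upper bounds.} These are immediate from Section~3 once the relevant optimization is carried out. Since the construction $S_s=\{w^n\}$ consists of genuine words over $\Sigma_s$ with $m(3,w^n)/\binom n3\to q(s)$, and $f(s,3,n)\le m(3,w^n)/\binom n3$ for every $n$, passing to the limit gives $f(s,3)\le q(s)$. So it suffices to evaluate $q(s)$, the minimum of the cubic polynomial $h_s$ over the relative interior of the simplex $\{x_i\ge 0,\ \sum_i x_i=\tfrac12\}$ together with its faces, for $s\in\{4,5,6,7\}$. Because $h_s$ has only $\lfloor (s-1)/2\rfloor\in\{1,2,2,3\}$ variables, I would eliminate the linear constraint, set the gradient of the resulting cubic to zero, and solve the polynomial system; the lower-dimensional faces (where some $x_i=0$) are then handled by the same method in fewer variables. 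For $s=4$ the critical point is a root of a quadratic; for $s=5$ the elimination yields a quartic, accounting for the degree-$4$ algebraic value of $q(5)$; $s=6,7$ are analogous. Substituting the minimizers back into $h_s$ gives the decimals $0.5133\ldots,\ 0.4610\ldots,\ 0.4288\ldots,\ 0.4033\ldots$.

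\textbf{Lower bounds.} Here I would run the flag algebra method of Razborov on word graphs. The first step is to pin down the limit theory: the objects are complete graphs on a linearly ordered vertex set whose edges carry one of three colours recording, for positions $i<j$, whether $w_i<w_j$, $w_i=w_j$, or $w_i>w_j$; the words over $\Sigma_s$ are exactly those word graphs that do not contain, as an induced subobject, any colouring on $s+1$ vertices with no ``$=$''-edge (an antichain of size $s+1$ for the equality pre-order). A monotone triple corresponds to a fixed finite set $\mathcal M$ of word graphs on $3$ vertices, and $f(s,3)$ equals the minimum over admissible limits of the total density of $\mathcal M$. The second step is the standard sum-of-squares relaxation: fix the flag order $N=7$, enumerate all admissible word graphs on up to $N$ vertices (for instance $12712$ of them when $N=7$, $s=6$), enumerate the types and the flags over each type, and write the density of $\mathcal M$ minus a combination of squares of flag vectors as a linear form that must be nonnegative, obtaining a semidefinite program whose optimum lower-bounds $f(s,3)$. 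The third step is to make it rigorous: solve the SDP numerically, round the dual solution to rational matrices, verify their positive semidefiniteness exactly, and recompute the resulting rational bound; truncating gives $0.5123\ldots,\ 0.4604\ldots,\ 0.4280\ldots$ for $s=4,5,6$. Subtracting from $q(s)$ then shows the gap is below $10^{-3}$ for $s\in\{4,5,6\}$, while for $s=7$ only the upper bound $q(7)=0.4033\ldots$ is recorded.

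\textbf{Main obstacle.} I expect the difficulty to be twofold. Analytically, identifying which root of the $s=5$ quartic is the global minimizer of $h_5$ — and checking that it beats all boundary values — is delicate, and similar (if tamer) case analysis is needed for $s=6,7$. Computationally, and this is the real bottleneck, the word-graph SDPs are far larger than in the permutation case because word graphs proliferate rapidly with $s$, so both solving them to sufficient accuracy and then extracting an exact positive-semidefinite certificate whose certified rational bound still lands within $10^{-3}$ of the conjectured value requires care.
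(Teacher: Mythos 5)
Your proposal follows essentially the same route as the paper: upper bounds by evaluating the minimum of the explicit cubic $h_s$ arising from the folded-form construction (Proposition \ref{prop:fs3} and Subsection \ref{subsec:upper}), and lower bounds via flag algebra on word graphs with types/flags and an order-$7$ SDP whose numerical solution is turned into a rigorous rational certificate (Section \ref{sec:flag} and Appendix \ref{appendix:A}). Two small points of difference: the feasible region is $\{x_i\ge 0,\ x_1+\cdots+x_{\lfloor (s-1)/2\rfloor}\le \tfrac12\}$ and the optimal points have $\sum_i x_i<\tfrac12$, so you should not eliminate the constraint as an equality or restrict to the facet $\sum_i x_i=\tfrac12$ (your subsequent description of the critical-point computations is, however, the right one); and the paper's rounding goes through an integer-rounded Cholesky factor $L$, so that $M=LL^T$ is automatically positive semidefinite and only an explicit $\varepsilon$-correction of the objective is needed, rather than rounding the solution matrices themselves and verifying their positive semidefiniteness exactly.
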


Finally, we consider the general case $f(s,k)$. Recall that $f(s,k)$ is a non-increasing function of $s$ and its limit is $g(k)$; yet how quickly does it converge as a function of $s$? The following theorem provides the answer.
\begin{theorem}\label{t:general}
	$f(s,k)-g(k)=\Theta(\frac{1}{s})$.
	In particular, $f(s,k) = \frac{1}{(k-1)^{k-1}} + \Theta(\frac{1}{s})$ for $k=3,4$ and, assuming the conjecture of Myers,  $f(s,k) = \frac{1}{(k-1)^{k-1}} + \Theta(\frac{1}{s})$ holds for all $k$.
\end{theorem}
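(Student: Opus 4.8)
The plan is to prove the two estimates $f(s,k)\le g(k)+O(1/s)$ and $f(s,k)\ge g(k)+\Omega(1/s)$ separately, with implied constants depending only on $k$. Given these, the displayed consequences are immediate, since $g(3)=\tfrac14$ and $g(4)=\tfrac1{27}$ are known (\cite{myers-2002}, \cite{BHL+-2015}) and $g(k)=\tfrac1{(k-1)^{k-1}}$ is exactly Myers' conjecture.

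\smallskip\noindent\emph{Upper bound.} For each $n$, pick a permutation $\pi$ of $[n]$ with $m(k,\pi)=g(k,n)\binom nk\le g(k)\binom nk$, view it as a word over $\Sigma_n$, and collapse its alphabet by the weakly monotone map $v\mapsto\lfloor sv/n\rfloor$ to obtain $w\in(\Sigma_s)^n$ whose $s$ letter classes each have $(1+o(1))n/s$ positions. Since this map is strictly monotone on any set of values it sends to distinct letters, a $k$-subset of positions that is monotone in $w$ but not in $\pi$ must contain two positions whose $\pi$-values lie in a common class of $w$; the number of such $k$-subsets is at most $\sum_{\text{classes }C}\binom{|C|}{2}\binom{n-2}{k-2}=(1+o(1))\tfrac{k(k-1)}{2s}\binom nk$. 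Hence $m(k,w)/\binom nk\le g(k)+\tfrac{k(k-1)}{2s}+o(1)$, and letting $n\to\infty$ (recall $f(s,k)=\lim_n f(s,k,n)$) gives $f(s,k)\le g(k)+\tfrac{k(k-1)}{2s}$.

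\smallskip\noindent\emph{Lower bound.} Let $w\in(\Sigma_s)^n$ have letter-class sizes $n_0,\dots,n_{s-1}$. Form a random permutation $w'$ of $[n]$ by replacing, independently for each class, the letters on that class by distinct values inside a thin interval in the class's natural slot, but put in a uniformly random internal order. Every realization of $w'$ is a permutation of $[n]$, and each collapsing step is weakly monotone, so a $k$-subset monotone in $w'$ is monotone in $w$; writing $X_S$ for the indicator that $S$ is monotone in $w'$, this gives $m(k,w)\ge\sum_S\mathbb E[X_S]=\mathbb E[m(k,w')]\ge g(k,n)\binom nk$ and, crucially, $m(k,w)\ge g(k,n)\binom nk+\sum_S\Pr[S\text{ monotone in }w,\text{ not in }w']$. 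Now use the combinatorial fact that in a monotone $k$-subword the positions carrying equal letters form a contiguous block: hence, if $S$ is monotone in $w$ and uses a repeated letter, randomizing the internal order of such a block destroys monotonicity with probability at least $\tfrac12$, so $\sum_S\Pr[\cdots]\ge\tfrac12 N(w)$, where $N(w)$ counts the monotone $k$-subwords of $w$ that use a repeated letter. It then remains to prove $N(w)\ge\Omega(\binom nk/s)$ for every $w$: exactly $\binom nk-e_k(n_0,\dots,n_{s-1})$ of the $k$-subsets use a repeated letter, and since $e_k$ is Schur-concave this is always at least $(1-o(1))\tfrac{\binom k2}{s}\binom nk$, while the contiguity fact together with a convexity estimate (in the spirit of $\sum_v n_v^2\ge n^2/s$, plus a bound on how far apart the positions of a single class can be spread) forces a positive fraction of these repeated-letter $k$-subsets to be monotone --- morally because an adversary could make almost all of them non-monotone only by pushing the positions of \emph{every} class toward the two ends of $[n]$, which is impossible for all $s$ classes at once. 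Combining everything, $f(s,k,n)\ge g(k,n)+\Omega(1/s)-o(1)$, and $n\to\infty$ yields $f(s,k)\ge g(k)+\Omega(1/s)$.

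\smallskip\noindent The easy half is the upper bound; the main obstacle is the lower bound, and inside it the quantitative claim $N(w)=\Omega(\binom nk/s)$. For $k=3$ this is a manageable computation: a monotone triple with a repeated letter has shape $(v,v,u)$, $(u,v,v)$ or $(v,v,v)$, its count has a closed form in the class sizes and the sums of intra‑class gaps, and maximizing those gaps (which is what an adversary wants) still leaves a surplus of order $n^3/s$. The general‑$k$ case is harder because one must also control monotone $k$-subwords in which several distinct letters are repeated, and the estimates must stay sharp enough to be uniform in $s$, including small $s$, where --- as the exact values $f(2,k)=k/2^{k-1}$ and $f(3,3)=2-\sqrt2$ warn --- the crude first‑order bounds are far from tight and the underlying identities must be kept exact rather than replaced by their asymptotics.
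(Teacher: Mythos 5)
Your upper bound is correct and is essentially the paper's argument (Proposition \ref{prop:general-upper}): collapse a minimizing permutation to $s$ balanced value-classes and charge every newly monotone $k$-subset to a pair of positions in a common class, giving $f(s,k)\le g(k)+\binom{k}{2}/s+O(1/s^2)$. Your lower-bound framework is also sound as far as it goes, and it is in fact a reformulation of the paper's setup: your random permutation $w'$ (distinct values in each class's slot, uniformly random internal order) is exactly a uniformly random permutation ``respecting'' $w$, the pointwise inequality $m(k,w)\ge m(k,w')+\#\{S\ \text{monotone in } w,\ \text{not in } w'\}$ is valid, and the ``destroyed with probability at least $\tfrac12$'' step for monotone subwords containing a repeated letter is fine. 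So the whole theorem has been correctly reduced to the claim $N(w)=\Omega_k\bigl(\binom{n}{k}/s\bigr)$ for every $w\in(\Sigma_s)^n$, where $N(w)$ counts monotone $k$-subwords of $w$ with a repeated letter.

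That claim, however, is the entire content of the lower bound, and you do not prove it. Your Schur-concavity estimate only counts \emph{all} $k$-subsets with a repeated letter, not monotone ones, and the heuristic that an adversary ``would have to push every class toward the two ends of $[n]$'' is really a $k=3$ phenomenon: for $k=3$ an equal pair plus \emph{any} third position outside the gap is automatically monotone, which is why your sketched computation works there, but for $k\ge4$ a repeated pair must be extended by a monotone $(k-2)$-pattern that is compatible with it, and nothing in your outline controls how often that is possible uniformly in $s$. The claim is true, but establishing it is where the paper's Theorem \ref{t:general-lower} does its real work: it first splits off the case of a letter occurring at least $n/6q^2$ times (with $q=(k-2)^2+1$), and otherwise partitions the positions into $q$ consecutive intervals, selects for each a ``popular'' and pairwise non-adjacent value-block of the respecting permutation, and applies Erd\H{o}s--Szekeres to these $q$ block indices to extract $k-1$ position-intervals whose selected value-blocks form an increasing, well-separated chain; only then does the event ``$x_0,x_1$ lie in the same letter class inside the first block'' (probability $\Theta(1/s)$) force a monotone-in-$w$ subword with a repeat. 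Without an argument of this kind (or some substitute), your proposal proves the upper bound and the correct reduction, but not the theorem; acknowledging that ``the general-$k$ case is harder'' does not close the gap.
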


The remainder of this paper is organized as follows. In Section \ref{sec:exact} we prove our exact results, Theorems \ref{t:exact-2} and \ref{t:exact-3}. The case $k=3$ consisting of the conjectured optimal upper bound for $f(s,3)$ and the explicit construction of the polynomial $h_s$ is given in Section \ref{sec:construction}.
Section \ref{sec:flag} describes our proof of the lower bounds using flag algebra; these two sections, together with Appendix \ref{appendix:A}, yield Theorem \ref{t:almost-exact}.
In the final Section \ref{sec:converge} we prove Theorem \ref{t:general}.

\section{Exact results}\label{sec:exact}
In this section we prove our exact results starting with the case of the binary alphabet, Theorem \ref{t:exact-2}, and proceeding with the case of the ternary alphabet, Theorem \ref{t:exact-3}.
We require some notation: For a word $w$, let $w_t$ denote the letter of $w$ at position $t$
(so that $w_1$ is the first letter of $w$),
let $q^<(\ell,t,w)$ denote the number of occurrences of the letter $\ell$ at positions of $w$ smaller
than $t$ and let $q^>(\ell,t,w)$ denote the number of occurrences of the letter $\ell$ at positions of $w$
larger than $t$.

\subsection{Binary alphabet}

\begin{proof}[proof of Theorem \ref{t:exact-2}]
We assume that all words are over $\Sigma_2=\{0,1\}$.
It is immediate to verify that for every $k$-word $u$,
the probability that a randomly chosen $k$-subword of the alternating $n$-word $01010...$ equals $u$ is $\frac{1}{2^k}+o_n(1)$.
As there are precisely $2k$ monotone $k$-words over $\Sigma_2$, it follows that $f(2,k) \le \frac{2k}{2^k}$.

To prove the lower bound, we show that binary words of length $n$ minimizing the density of monotone $k$-subwords
must be close to an alternating $n$-word. In fact, we will prove that if $n$ is odd, then it {\em must} be an
alternating $n$-word. Note that it suffices to consider the case where $n$ is odd to obtain the claimed lower bound on $f(2,k)$. So, let $n$ be odd and let $w$ be a word of length $n$ with $m(k,w) \le m(k,w^*)$ for all $w^* \in (\Sigma_2)^n$. Without loss of generality, assume that the number of $0$'s in $w$ is larger than the number of $1$'s. We may further assume that $w$ is not alternating, otherwise there is nothing to prove.
Let $x$ denote the number of $0$'s in $w$ and let $y=n-x < x$ denote the number of $1$'s in $w$.

We first claim that $w$ must start and end with $0$.
Assume, to the contrary, that it starts with $1$ and let $t$ be the smallest position such that $w_t=0$, so we have $t \ge 2$.
Flip $w_t$ and $w_{t-1}$ to obtain the word $w^*$. We have
\begin{align*}
	m(k,w)-m(k,w^*) & = \sum_{h=0}^{k-2}\binom{q^<(1,t-1,w)}{h}\binom{q^>(0,t,w)}{k-2-h}
	\! - \! \binom{q^<(0,t-1,w)}{h}\binom{q^>(1,t,w)}{k-2-h}\\
	& =  \left(\sum_{h=0}^{k-2} \binom{t-2}{h}\binom{x-1}{k-2-h}\right) - \binom{y-t+1}{k-2}\\
	& \ge  \binom{x-1}{k-2} - \binom{y-t+1}{k-2}\\
	& \ge  \binom{x-1}{k-2} - \binom{y-1}{k-2} > 0
\end{align*}
contradicting the minimality of $w$. Hence $w$ starts with $0$ and similarly (by considering the reverse)
$w$ ends with $0$. We next show that $x-y=1$. Assume the contrary (that $x-y \ge 3$).
Let $w^*$ be obtained from $w$ by changing both $w_1$ and $w_n$ to $1$.
It is easily verified that $m(k,w)-m(k,w^*) =\binom{x-2}{k-2}-\binom{y}{k-2} > 0$, a contradiction.
So, we now know that $x-y=1$, that $w$ starts and ends with $0$ and is not alternating.
Hence, there must be some position $t$ such that $w_t=w_{t+1}=0$.
Now, there are two possible cases: (i) there exists a smallest position $r > t$ such that
$w_r=1$, $w_t=\cdots=w_{r-1}=0$ and $q^<(0,r-1,w) > q^<(1,r-1,w)$, or else (ii) there is a largest position $r < t$
such that $w_r=1$, $w_{r+1}=\cdots=w_t=0$ and $q^>(0,r+1,w) > q^>(1,r+1,w)$.
The two cases are symmetrical as if one of them does not hold, then it holds in the reverse of $w$. (For example, in $0111010001010$ we may choose $t=7$ since $w_7=w_8=0$. The first case does not hold since
although $w_{10}=1$, we have $q^<(0,9,w) = q^<(1,9,w) = 4$. The second case does hold since
$w_6=1$ and $q^>(0,7,w) > q^>(1,7,w)$. Notice that if we take the reverse $0101000101110$ then we can choose $t=5$ and the first case holds since we have $w_8=1$ and $q^<(0,7,w) > q^<(1,7,w)$.
As another example, in $01100$ the first case does not hold simply since $w_r$ does not exist, but in the reverse $00110$ it does hold as we $w_3=1$ and $q^<(0,2,w) > q^<(1,2,w)$).
Hence, assume without loss of generality that (i) holds, so $q^<(0,r-1,w) > q^<(1,r-1,w)$.
But notice that this also implies that $q^>(1,r,w) \ge q^>(0,r,w)$ since $x-y=1$.
Let $w^*$ be obtained from $w$ by flipping $w_r$ and $w_{r-1}$.
We have
$$
m(k,w)-m(k,w^*) =  \sum_{h=0}^{k-2}\binom{q^<(0,r-1,w)}{h}\binom{q^>(1,r,w)}{k-2-h}
\! - \! \binom{q^<(1,r-1,w)}{h}\binom{q^>(0,r,w)}{k-2-h}\;.
$$
But since $q^<(0,r-1,w) > q^<(1,r-1,w)$ and since $q^>(1,r,w) \ge q^>(0,r,w)$
we have that $m(k,w)-m(k,w^*) > 0$, a contradiction.
We have proved that if $n$ is odd, $w$ minimizing $m(k,w)$ must be alternating, as required.
\end{proof}

\subsection{Ternary alphabet}

\begin{proof}[Proof of Theorem \ref{t:exact-3}]
We assume that all words are over $\Sigma_3=\{0,1,2\}$.
Our proof consists of two stages. In the first stage we prove the the structure of an extremal word is of a certain form, and in the second stage we optimize over the parameters of that form.

Let $y$ be a given nonnegative integer. Let $n > 2y$ be an odd integer and consider all $n$-words over $\Sigma_3$
with $2y$ $1$'s and where the number of $0$'s, denoted $x$, is larger than the number of $2$'s, denoted $z$.
Under these restrictions, let $w$ be an $n$-word minimizing $m(3,w)$.
We will prove that $w$ must start with $y$ $1$'s, proceed with alternating $0$'s and $2$'s, and end with $y$ $1$'s. For example, if $y=2$ and $n=9$ then $w=110202011$. Let $u$ be the subword of $w$ obtained by ignoring all $1$'s.

\begin{lemma}\label{l:1}
	$u$ is alternating.
\end{lemma}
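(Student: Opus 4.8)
The plan is to run an exchange argument in the spirit of the proof of Theorem~\ref{t:exact-2}, now applied to the two‑letter sub‑alphabet $\{0,2\}$ of $u$, the genuinely new ingredient being the bookkeeping for triples of $w$ that contain the middle letter $1$. Suppose, for contradiction, that $w$ minimizes $m(3,\cdot)$ under the stated restrictions but that $u$ is not alternating. The first step is to classify, for a triple of positions $i<j<k$, when $(w_i,w_j,w_k)$ is monotone, organised by how many of the three letters equal $1$: a triple with three $1$'s is always monotone; a triple with exactly two $1$'s, at positions $p<q$, together with a non‑$1$ letter at position $r$, is monotone iff $r\notin(p,q)$, irrespective of that letter's value; a triple with no $1$ lies inside $u$ and, over $\{0,2\}$, fails to be monotone exactly when it is a $020$ or a $202$ pattern; and a triple with exactly one $1$, at position $\pi$, with non‑$1$ letters at $r_1<r_2$, is monotone iff either $\pi\notin(r_1,r_2)$ and $w_{r_1}=w_{r_2}$, or $\pi\in(r_1,r_2)$ and $w_{r_1}\neq w_{r_2}$.

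This yields the decomposition $m(3,w)=\binom{x+z}{3}+C+A-N$, where $N$ is the number of $020$ plus $202$ subwords of $u$, $C$ is the contribution of triples with at least two $1$'s (depending only on the positions of the $1$'s), and $A$ is the contribution of the one‑$1$ triples. In particular, fixing the positions of the $1$'s and the counts $x,z$ and permuting the $\{0,2\}$‑letters among their positions produces an admissible word and changes neither $\binom{x+z}{3}$ nor $C$, so a minimizer must minimize $A-N$ over all such permutations; and since $x+z=n-2y$ is odd, $x-z$ is odd, so ``$u$ alternating'' is equivalent to the conjunction of $x-z=1$ and $u$ being arranged alternately.

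The argument then splits into an arrangement part and a balancing part. In the arrangement part one assumes $x-z=1$ but $u$ is not arranged alternately; since $x>z$, $u$ then contains a maximal run of $0$'s of length at least two. I would locate such a run through a count comparison of the $\{0,2\}$‑letters to its left and right — precisely the device used via the $q^{<},q^{>}$ inequalities in the proof of Theorem~\ref{t:exact-2} — and swap an endpoint of the run with the letter adjacent to it \emph{in $u$} and of the opposite value; because the two swapped positions are consecutive in $u$, the only $1$'s that ``see'' the change are those strictly between them, so the change in $N$ is governed by the same inequalities as in the binary case and the change in $A$ is controlled by how those few $1$'s are placed. One concludes that $A-N$ strictly decreases (if the relevant inequality fails on one side, it holds on the other after reversing $w$), a contradiction. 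In the balancing part one assumes $x-z\ge 3$; here no single letter change need help — for $w=0^{a}2^{z}0^{b}$ with $a,b$ small, no $0\to 2$ change decreases $m(3,w)$ — so one argues globally, comparing $w$ with the explicit balanced alternating word $1^{y}\,0\,(20)^{z^{*}}\,1^{y}$ (where $z^{*}=(n-2y-1)/2$; cf.\ the example $110202011$) and bounding $N$, $C$, $A$ to get $m(3,w^{**})<m(3,w)$.

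The step I expect to be the main obstacle is the handling of the one‑$1$ term $A$, which has no analogue in the binary case: unlike $N$, whose change under a local swap has a definite sign, $\Delta A$ depends on the distribution of the $2y$ ones around the swapped letters and can a priori point the wrong way, so one needs the careful choice of swap above together with a uniform estimate over all placements of the intervening $1$'s, drawing on $x-z=1$ and the parity of $n$. A secondary difficulty is the balancing part: in contrast to the binary proof, the best configuration with $x-z\ge 3$ is far from alternating (it clusters the scarce $2$'s in the middle to create many $020$ patterns), so the contradiction must come from a global comparison of optimal configurations across count profiles rather than from a one‑letter perturbation. The degenerate cases ($z=0$, or very small $n$) are disposed of directly.
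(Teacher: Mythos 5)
Your triple classification by the number of $1$'s and the decomposition $m(3,w)=\binom{x+z}{3}+C+A-N$ are correct, and your ``arrangement part'' aims at exactly the right move (swap two opposite letters that are consecutive in $u$, located via $q^<,q^>$ count conditions, as in the binary proof). But two essential steps are missing. The more serious one is the balancing case $x-z\ge 3$: ``comparing $w$ with the balanced alternating word and bounding $N,C,A$'' is not an argument but a restatement of the task, since you would have to lower-bound $m(3,\cdot)$ over \emph{all} words with $x-z\ge 3$ and arbitrary placements of the $2y$ ones, which is essentially the original optimization problem in that regime. The obstruction you perceive comes from omitting the step the paper does first: for \emph{any} minimizer (no assumption on $x-z$), $u$ must start and end with $0$; this is proved by swapping the first $0$ of $w$ with the $2$ preceding it in $u$ and splitting the affected triples into those containing both swapped positions, only the first, or only the second, using nothing beyond $x>z$. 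With that endpoint structure in hand, $x-z\ge 3$ is eliminated by a \emph{two}-letter change (turn the first and last letters of $u$ from $0$ to $2$), which yields $m(3,w)-m(3,w^*)\ge (x-2)-z>0$; your observation that no \emph{single}-letter change helps does not rule this out, so no global comparison is needed.

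The second gap is that the decisive inequality of the arrangement part is left open. Your claim that only the $1$'s strictly between the swapped positions matter is true but needs the pairing argument (for a $1$ outside the gap, the two one-$1$ triples through the two swapped positions and a fixed third $\{0,2\}$-position simply exchange their statuses, so they cancel); more importantly, your worry that $\Delta A$ ``can a priori point the wrong way,'' to be handled by an unspecified uniform estimate, dissolves under an exact computation: if the swapped positions are $r''<r'$, the total change is $(2(r'-r'')-1)\bigl(q^<(0,r'',w)+q^>(2,r',w)-q^<(2,r'',w)-q^>(0,r',w)\bigr)$, i.e.\ each intervening $1$ contributes twice the same signed count-difference that governs the binary case, so the count condition used to locate the swap already forces strict positivity, uniformly in the placement of the $1$'s. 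In short, the skeleton (decomposition, choice of swap, count conditions) is sound, but the endpoint-structure step, the $x-z\ge3$ case, and the key swap inequality are not established, so the proposal remains a plan rather than a proof.
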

\begin{proof}
We first prove that $u$ must start and end with $0$. Assume, to the contrary, that $u$ starts with $2$ and let $t$ be the smallest position such that $w_t=0$ (so we have $t \ge 2$) and let $r < t$ be the largest position such that $w_r=2$. For example, if $w=122111000$ then $t=7$ and $r=3$. Notice that $w_{r+1}=\cdots=w_{t-1}=1$
(possibly $r+1=t$). Flip $w_r$ and $w_t$ to obtain the word $w^*$. For the last example, we have $w^*=120111200$.
To evaluate $m(3,w)-m(3,w^*)$ we need to consider $3$-words of $w$ that were affected by the flip.
These can be partitioned into three types: Those $3$-words that contain both locations $r,t$, those that contain
location $r$ but not $t$, and those that contain $t$ but not $r$ ($3$-words not of one of these types have not changed). Let $A$, $B$ and $C$ denote the respective contribution of each of these types to $m(3,w)-m(3,w^*)$.
We have:
\begin{align*}
	A & = q^>(0,t,w) + q^<(2,r,w) - q^>(2,t,w)\\
	 & \ge q^>(0,t,w) - q^>(2,t,w)\\
	 & \ge (x-1) - (z-1)\\
	 & > 0\;.
\end{align*}
As for the second type, we have
\begin{align*}
	B & = q^<(2,r,w)\cdot(t-r-1)+(t-r-1)\cdot (q^>(1,t,w)+q^>(0,t,w)) \\
	& - q^<(1,r,w)\cdot(t-r-1)-(t-r-1)\cdot q^>(2,t,w)
\end{align*}
and similarly for the third type we have
\begin{align*}
	C & = (q^<(2,r,w)+q^<(1,r,w))\cdot(t-r-1)+(t-r-1)\cdot q^>(0,t,w) \\
	& - (t-r-1)\cdot (q^>(1,t,w)+q^>(2,t,w))\;.
\end{align*}
As $x > z$ we have
\begin{align*}
B+C & = 2(t-r-1)(q^<(2,r,w) + q^>(0,t,w) - q^>(2,t,w)) \\
& \ge 2(t-r-1)(q^>(0,t,w) - q^>(2,t,w)) \\
& \ge 2(t-r-1)(x-1-z) \ge 0\;.
\end{align*}
Hence, $m(3,w)-m(3,w^*) = A+B+C > 0$, a contradiction. Similarly (by considering the reverse)
$u$ ends with $0$. We next show that $x-z=1$. Assume the contrary (that $x-z \ge 3$).
Let $w^*$ be obtained from $w$ by changing the first and last letters of $u$ from $0$ to $2$.
It is easily verified that $m(3,w)-m(3,w^*) \ge (x-2)-z > 0$, a contradiction to the minimality of $w$
(as $w^*$ still has $2y$ $1$'s).
So, we now know that $x-z=1$, that $u$ starts and ends with $0$ and assume it is not alternating
(otherwise we are done).
Hence, there must be some position $t$ such that $u_t=u_{t+1}=0$.
As in the proof of the binary case, there are now two possible cases:
(i) there exists a smallest position $r >t$ such that
$u_r=2$, $u_t=\cdots=u_{r-1}=0$ and $q^<(0,r-1,u) > q^<(2,r-1,u)$, or else (ii) there is a largest position $r < t$
such that $u_r=2$, $u_{r+1}=\cdots=u_t=0$ and $q^>(0,r+1,u) > q^>(2,r+1,u)$.
Notice that the two cases are symmetrical as if one of them does not hold, then it holds in the reverse of $w$
(which also reverses $u$).
Hence, assume without loss of generality that (i) holds, so $q^<(0,r-1,u) > q^<(2,r-1,u)$.
But notice that this also implies that $q^>(2,r,u) \ge q^>(0,r,u)$ since $x-z=1$.
Returning to $w$, position $r$ in $u$ corresponds to some position $r'$ in $w$ and position $r-1$ in $u$
corresponds to some position
$r''$ in $w$ where we have $r'' < r'$ and all letters strictly between $r''$ and $r'$ (if there are any)
are $1$'s. Let $w^*$ be obtained from $w$ by flipping $w_{r'}$ and $w_{r''}$.
We have that
\begin{align*}
m(3,w)-m(3,w^*) & = (2(r'-r'')-1) \cdot (q^<(0,r'',w)+q^>(2,r',w)-q^<(2,r'',w)-q^>(0,r',w))\\
 & \ge  q^<(0,r'',w)+q^>(2,r',w)-q^<(2,r'',w)-q^>(0,r',w) \\
 & = q^<(0,r-1,u)+q^>(2,r,u)-q^<(2,r-1,u)-q^>(0,r',u)\\
 & > 0\;,
\end{align*}
a contradiction. We have proved that $u$ is alternating, as claimed.
\end{proof}

\begin{lemma}\label{l:2}
	$u$ is a consecutive subword of $w$. Furthermore, $w$ begins with $y$ $1$'s and ends with $y$ $1$'s.
\end{lemma}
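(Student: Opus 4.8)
The plan is to combine the structure result just established (Lemma~\ref{l:1}) with a short optimization. By Lemma~\ref{l:1}, the subword $u=c_1c_2\cdots c_m$ obtained from $w$ by deleting the $2y$ ones is the alternating word $0(20)^{x-1}$, whose length $m:=x+z=2x-1$ is odd. Thus $w$ is obtained from $u$ by distributing the $2y$ ones into the $m+1$ gaps of $u$: let gap $0$ be the space before $c_1$, gap $i$ (for $1\le i\le m-1$) the space between $c_i$ and $c_{i+1}$, and gap $m$ the space after $c_m$, and let $a_i$ be the number of ones placed in gap $i$, so $\sum_{i=0}^m a_i=2y$. The goal is to show that $m(3,w)$ equals a constant depending only on $u$ minus the quantity
\[
Q \;:=\; \sum_{0\le i<j\le m} a_i a_j\,(j-i),
\]
and that $Q$ is maximized, uniquely, by $a_0=a_m=y$ and $a_1=\cdots=a_{m-1}=0$; this forces $w=1^y\,u\,1^y$, which is exactly the assertion.

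First I would count the \emph{non-monotone} triples of $w$ according to how many ones they contain. A triple with three ones is never non-monotone, and a triple with no ones involves only the letters of $u$, so its count is a constant $N_0(u)$. For a triple with exactly two ones and a third letter $\ell\in\{0,2\}$ at position $p$, a direct check of the three position patterns shows it is non-monotone precisely when $p$ lies strictly between the two ones; summing over all such configurations shows that the number of these triples is exactly $Q$. Finally, for a triple with exactly one $1$ at position $p$ and two non-$1$ letters $c_a,c_b$ with $a<b$, the analogous check shows it is non-monotone iff either $c_a=c_b$ (equivalently $a\equiv b\pmod 2$) and $p$ lies between $c_a$ and $c_b$, or $c_a\ne c_b$ and $p$ lies outside; writing $\sigma(a,b):=\sum_{k=a}^{b-1}a_k$ for the number of ones between $c_a$ and $c_b$, this count equals
\[
\sum_{\substack{1\le a<b\le m\\ a\equiv b\,(2)}}\sigma(a,b)\;+\;\sum_{\substack{1\le a<b\le m\\ a\not\equiv b\,(2)}}\bigl(2y-\sigma(a,b)\bigr).
\]
The key point is that the coefficient of $a_k$ in $\sum_{a\equiv b}\sigma(a,b)-\sum_{a\not\equiv b}\sigma(a,b)$ equals $\bigl(\sum_{a=1}^{k}(-1)^a\bigr)\bigl(\sum_{b=k+1}^{m}(-1)^b\bigr)$, and since $m$ is odd one of these two factors vanishes for every $k$; hence the two $\sigma$-sums cancel and the displayed count equals the constant $2y\,xz$. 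Altogether $m(3,w)=\binom{n}{3}-N_0(u)-2y\,xz-Q$, so minimizing $m(3,w)$ is equivalent to maximizing $Q$.

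It remains to maximize $Q$. Rewriting $Q=\sum_{\ell=0}^{m-1}L_\ell(2y-L_\ell)$ with $L_\ell:=a_0+\cdots+a_\ell$ (a pair of ones in gaps $i<j$ contributes $1$ to the $\ell$-th summand for each $\ell$ with $i\le\ell<j$, hence $j-i$ in total), every summand is at most $y^2$, with equality only when $L_\ell=y$. Thus $Q\le m y^2$, with equality iff $L_\ell=y$ for all $0\le\ell\le m-1$; reading these equalities off in order forces $a_0=y$, then $a_1=\cdots=a_{m-1}=0$, and finally $a_m=2y-L_{m-1}=y$. Therefore the unique minimizer satisfies $w=1^y\,0(20)^{x-1}\,1^y$, which shows both that $u$ occurs as a consecutive subword of $w$ and that $w$ begins and ends with $y$ ones.

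The only delicate step is the middle paragraph: carefully determining, for each triple type, exactly when the triple fails to be monotone, and then verifying the cancellation of the two $\sigma$-sums, which is precisely where the oddness of $m$ (equivalently $x-z=1$, from Lemma~\ref{l:1}) is used. Once the identity $m(3,w)=\text{const}-Q$ is in hand, the remaining optimization of $Q$ is a one-line convexity argument.
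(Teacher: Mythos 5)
Your proof is correct, and it takes a genuinely different route from the paper. The paper argues by a local exchange: it takes the first position $t$ with $w_t=1$, $w_{t-1}\ne 1$ (assuming, after possibly reversing, that more $1$'s lie to the right of $t$ than to its left), moves that $1$ to the front of the current non-$1$ prefix, and computes the change in monotone triples in terms of $a_j=q^<(j,t,w)$, $b_j=q^>(j,t,w)$; the alternation of $u$ forces $a_0=a_2$ or $b_0=b_2$, which together with $b_1>a_1$ makes the change strictly positive, contradicting minimality. You instead fix $u$ (which Lemma~\ref{l:1} and the constraints $x>z$, $n$ odd pin down as $0(20)^{x-1}$ with $m=2x-1$ odd) and derive the exact global identity $m(3,w)=\binom{n}{3}-N_0(u)-2yxz-Q$ with $Q=\sum_{i<j}a_ia_j(j-i)$, then maximize $Q$ by the telescoping rewrite $Q=\sum_{\ell=0}^{m-1}L_\ell(2y-L_\ell)$. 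I checked the delicate middle step: the case analysis of non-monotone triples by their number of $1$'s is right (two $1$'s are non-monotone iff the third letter sits between them; one $1$ is non-monotone iff it sits between equal letters or outside unequal ones), and the cancellation of the two $\sigma$-sums via $\bigl(\sum_{a=1}^{k}(-1)^a\bigr)\bigl(\sum_{b=k+1}^{m}(-1)^b\bigr)=0$ does use the oddness of $m$ correctly, including the boundary gaps $k=0,m$; the identity also checks out on small examples. Your approach is longer than the paper's swap but buys more: an exact formula for $m(3,w)$ for any gap distribution (which in particular makes the subsequent optimization over $y$ in the proof of Theorem~\ref{t:exact-3} immediate) and uniqueness of the extremal form $1^y u 1^y$, whereas the paper's exchange argument only shows a non-extremal word can be improved.
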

\begin{proof}
	Assume that $u$ is not consecutive, or that it is consecutive but $w$ does not begin with $y$ $1$'s nor
	end with $y$ $1$'s.
	Let $t$ be the smallest position such that $w_t=1$ and $w_{t-1} \neq 1$.
	We may assume that $q^<(1,t,w) < q^>(1,t,w)$ (recall that the total number of $1$'s is even) as
	otherwise we can use the reverse of $w$. Let $p$ be the smallest index such that $w_p \neq 1$.
	Let $w^*$ be obtained by moving $w_t$ to position $p$ and shifting $w_p \cdots w_{t-1}$ one position to the right. For example, if $w=111021110201201$ then $p=4$, $t=6$ and $w^*=111102110201201$.
	For notational clarity, let $a_j=q^<(j,t,w)$ and let $b_j=q^>(j,t,w)$.
	We have
$$
	m(3,w)-m(3,w^*) = a_0b_2+a_2b_0+b_1a_0+b_1a_2-a_0b_0-a_2b_2-a_1a_0-a_1a_2\;.
$$
But notice that $b_1=q^>(1,t,w) > a_1 = q^<(1,t,w)$ and that $a_0+a_2=t-p > 0$, so
$$
	m(3,w)-m(3,w^*) > a_0b_2+a_2b_0-a_0b_0-a_2b_2\;.
$$
But since $u$ is alternating, we have that either $a_0=a_2$ or that $b_0=b_2$. In either case, the r.h.s. of the last inequality is $0$, so $m(3,w)-m(3,w^*) > 0$, a contradiction.
\end{proof}

A word $w$ satisfying the statement of Lemma \ref{l:2} is said to be of {\em proper form}.
Namely, $w$ is of the form $1^yu1^y$ where $u=0202...20$ is alternating.
Let $n \ge 3$ be given and let $w^*$ be a word such that $m(3,w^*) \le m(3,w^{**})$ for all $w^{**} \in
(\Sigma_3)^n$. Equivalently, $f(3,3,n)=m(3,w^*)/\binom{n}{3}$.
If $n$ is odd and the number of $1$'s in $w^*$ is even then, by Lemma \ref{l:2}, we may assume that $w^*$ is of proper form. Otherwise, there exists a word $w$ of proper form of length $n'$ where $n-2 \le n' \le n$ such that
$f(3,3,n')=m(3,w)/\binom{n'}{3}$. Clearly if we append $n-n' \le 2$ arbitrary letters to $w$ we obtain an $n$-word. As any letter participates in $O(n^2)$ monotone $3$-words we have:
$$
f(3,3,n')+o_n(1)=\frac{m(3,w)+O(n^2)}{\binom{n'}{3}} \ge \frac{m(3,w^*)}{\binom{n'}{3}} \ge
\frac{m(3,w^*)}{\binom{n}{3}} = f(3,3,n)\;.
$$
Hence, in order to determine $f(3,3)$ it suffices to consider words of proper form.
For odd $n$ and $0 \le y < n/2$ let $w(n,y)=1^y0202...201^y$. So, our goal is to determine
$0 \le y < n/2$ such that $m(3,w(n,y))$ is minimized. Setting $y=\alpha n$ it is easily verified that
\begin{align*}
\frac{m(3,w(n,\alpha n))}{\binom{n}{3}} & = \frac{3}{4}(1-2\alpha)^3+(2\alpha)^3+6\alpha^2(1-2\alpha)+12\alpha(\tfrac{1}{2}-\alpha)^2 +o_n(1)\\
& = \frac{3}{4}-\frac{3\alpha}{2} + 3\alpha^2 + 2\alpha^3 + o_n(1)\;.
\end{align*}
Hence
\begin{equation}\label{e:h3}
f(3,3) = \min_{0 \le \alpha \le \frac{1}{2}} \frac{3}{4}-\frac{3\alpha}{2} + 3\alpha^2 + 2\alpha^3 = 2-\sqrt{2}
\end{equation}
where the minimum is obtained at $\alpha=\frac{\sqrt{2}-1}{2}$\,. 
\end{proof}

\section{A construction for every alphabet}\label{sec:construction}

We present an explicit construction of a sequence $S_s= \{w^n\}_{n=1}^{\infty}$ of words over $\Sigma_s$ where $w^n$ is an $n$-word, which is conjectured to be asymptotically optimal.
In other words, it is conjectured that $m(3,w^n)/\binom{n}{3}$ tends to $f(s,3)$ as $n$ goes to infinity. 
By {\em explicit} we mean that for every fixed $s$ and every given $n$, there is a polynomial time algorithm that generates $w^n$.

For two letters $\ell_1,\ell_2$, we say that a word is $(\ell_1,\ell_2)$-alternating if
it only contains the letters $\ell_1,\ell_2$ and any two consecutive letters are distinct.
To describe the construction, it is convenient to distinguish cases according to the parity of $s$.

Suppose first that $s$ is even, and let $x_1,\ldots x_{s/2-1}$ be real variables such that
$x_i \ge 0$ and $x_1+\cdots + x_{s/2-1} \le \frac{1}{2}$.
We say that an $n$-word over $\Sigma_s$ is of {\em folded form corresponding to $x_1,\ldots, x_{s/2-1}$}
if it is a palindrome and
the first (hence last) $\lfloor x_1n \rfloor$ letters are $(s/2-1,s/2)$-alternating,
the next $\lfloor x_2n \rfloor$ letters are $(s/2-2,s/2+1)$-alternating
and so on until the next $\lfloor x_{s/2-1}n \rfloor$ letters are $(1,s-2)$-alternating and finally
the remaining (hence central positions) are $(0,s-1)$-alternating.
Let $F_s(x_1,\ldots x_{s/2-1})$ be the set of all words of folded form corresponding to $x_1,\ldots, x_{s/2-1}$
and observe that for every $n \ge 1$ there is at least one $n$-word in $F_s(x_1,\ldots, x_{s/2-1})$.
For example, suppose $s=6$, $x_1=0.2$, $x_2=0.25$ and $n=11$, then $23145054132 \in F_6(0.2,0.25)$.
Clearly, for any two $n$-words $w,w'$ of $F_s(x_1,\ldots, x_{s/2-1})$, $|m(3,w)-m(3,w')|=O(n^2)$.

For odd $s$, our construction is just slightly different. Let $x_1,\ldots, x_{(s-1)/2}$ be real variables such that $x_i \ge 0$ and $x_1+\cdots + x_{(s-1)/2} \le \frac{1}{2}$.
We say that an $n$-word over $\Sigma_s$ is of {\em folded form corresponding to $x_1,\ldots, x_{(s-1)/2}$}
if it is a palindrome and
the first (hence last) $\lfloor x_1n \rfloor$ letters are all equal to $(s-1)/2$,
the next $\lfloor x_2n \rfloor$ letters are $((s-3)/2,(s+1)/2)$-alternating,
the next $\lfloor x_3n \rfloor$ letters are $((s-5)/2,(s+3)/2)$-alternating,
and so on until the next $\lfloor x_{(s-1)/2}n \rfloor$ letters are $(1,s-2)$-alternating and finally
the remaining (hence central positions) are $(0,s-1)$-alternating.
Let $F_s(x_1,\ldots, x_{(s-1)/2})$ be the set of all words of folded form corresponding to $x_1,\ldots, x_{(s-1)/2}$ and observe that for every $n \ge 1$ there is at least one $n$-word in $F_s(x_1,\ldots, x_{(s-1)/2})$.
For example, suppose $s=5$, $x_1=0.2$, $x_2=0.25$ and $n=11$, then $22310401322 \in F_5(0.2,0.25)$.
Clearly, for any two $n$-words $w,w'$ of $F_s(x_1,\ldots x_{(s-1)/2})$, $|m(3,w)-m(3,w')|=O(n^2)$.

Let $h_s(x_1,\ldots, x_{\lfloor (s-1)/2 \rfloor})$ be the {\em unique} polynomial such that
for every $w \in F_s(x_1,\ldots, x_{\lfloor (s-1)/2 \rfloor})$ it holds that 
$m(3,w)/\binom{n}{3} = h_s(x_1,\ldots, x_{\lfloor (s-1)/2 \rfloor}) + o_n(1)$ 
(notice that $h_s$ is indeed unique since recall that for any two $n$-words $w,w'$ of
$F_s(x_1,\ldots,x_{\lfloor (s-1)/2 \rfloor})$, $|m(3,w)-m(3,w')|=O(n^2)$). Clearly,
$h_s$ is a degree $3$ polynomial and clearly
\begin{proposition}\label{prop:fs3}
\[
\pushQED{\qed}
f(s,3) \le \min_{x_i \ge 0\;,\; x_1 + \cdots + x_{\lfloor (s-1)/2 \rfloor} \le \frac{1}{2}} 
h_s(x_1,\ldots, x_{\lfloor (s-1)/2 \rfloor})\;. \qedhere \popQED
\]
\end{proposition}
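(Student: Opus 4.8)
The plan is to unwind the definitions; the inequality is essentially immediate from the construction in this section. First I would fix an arbitrary feasible point $(x_1,\ldots,x_{\lfloor (s-1)/2 \rfloor})$, that is, one with $x_i \ge 0$ for all $i$ and $x_1+\cdots+x_{\lfloor (s-1)/2 \rfloor} \le \frac{1}{2}$. By the observation accompanying the definition of folded form (in both the even and odd cases of $s$), for every $n \ge 1$ there is at least one $n$-word $w^n \in F_s(x_1,\ldots,x_{\lfloor (s-1)/2 \rfloor})$; I would select one such word for each $n$.

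Next I would invoke the defining property of $h_s$: for each such $w^n$ we have $m(3,w^n)/\binom{n}{3} = h_s(x_1,\ldots,x_{\lfloor (s-1)/2 \rfloor}) + o_n(1)$. Combining this with the defining inequality $f(s,3,n) \le m(3,w^n)/\binom{n}{3}$, which holds because $f(s,3,n)$ is by definition the minimum of $m(3,w)/\binom{n}{3}$ over all $w \in (\Sigma_s)^n$, and letting $n \to \infty$ while using the fact (noted in the introduction) that $f(s,3,n) \to f(s,3)$, I obtain $f(s,3) \le h_s(x_1,\ldots,x_{\lfloor (s-1)/2 \rfloor})$.

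Finally, since the feasible point was arbitrary, the bound holds for the infimum of $h_s$ over all feasible points; and because $h_s$ is a polynomial, hence continuous, and the feasible region $\{x_i \ge 0,\ x_1+\cdots+x_{\lfloor (s-1)/2 \rfloor} \le \frac{1}{2}\}$ is a nonempty compact subset of Euclidean space, this infimum is attained and is therefore a genuine minimum, giving exactly the stated inequality. I do not anticipate any real obstacle here: the only points requiring a word of care are the nonemptiness of $F_s(\cdot)$ for each $n$ and the passage to the limit in $n$, both of which follow directly from the material already established earlier in this section.
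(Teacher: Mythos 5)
Your proof is correct and is precisely the argument the paper intends: the paper states the proposition as immediate from the construction, and the steps you spell out (nonemptiness of $F_s$ for each $n$, the defining asymptotic property of $h_s$, the bound $f(s,3,n) \le m(3,w^n)/\binom{n}{3}$, passage to the limit, and attainment of the minimum by compactness) are exactly the implicit verification. Nothing is missing.
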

\noindent We conjecture that this upper bound is tight.
\begin{conjecture}\label{conj:1}
\begin{equation}\label{e:polymin}
	f(s,3) = \min_{x_i \ge 0\;,\; x_1 + \cdots + x_{\lfloor (s-1)/2 \rfloor} \le \frac{1}{2}} 
	h_s(x_1,\ldots, x_{\lfloor (s-1)/2 \rfloor})\;.
\end{equation}
\end{conjecture}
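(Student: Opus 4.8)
Since Proposition~\ref{prop:fs3} already gives the inequality $f(s,3)\le\min_x h_s(x)$, proving Conjecture~\ref{conj:1} amounts to the matching lower bound $f(s,3)\ge\min_x h_s(x)$: one must show that, asymptotically, no $n$-word over $\Sigma_s$ beats the folded construction. The plan is to follow the two-stage scheme of the proof of Theorem~\ref{t:exact-3} in the general alphabet. In the first (structural) stage one classifies the extremal words, showing that an $n$-word minimizing $m(3,\cdot)$ coincides with some $w\in F_s(x_1,\dots,x_{\lfloor(s-1)/2\rfloor})$ outside $O(1)$ positions. In the second stage, using $|m(3,w)-m(3,w')|=O(n^2)$ for $w,w'$ of the same folded form together with the defining property of $h_s$, one obtains $f(s,3)=\min_x h_s(x)$ modulo the residual continuous minimization of the degree-$3$ polynomial $h_s$ over the simplex $\{x_i\ge0,\ \sum x_i\le\tfrac12\}$ --- the (messy but elementary) calculus problem already solved analytically for $s\le7$ in Subsection~\ref{subsec:upper}.

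The structural stage should proceed, as in Lemmas~\ref{l:1} and~\ref{l:2}, through local exchange moves, based on the identity
$$
m(3,w)=\binom n3-\sum_j\bigl(L_<(j)R_<(j)+L_>(j)R_>(j)\bigr),
$$
in which $L_<(j),R_<(j)$ (resp.\ $L_>(j),R_>(j)$) count the letters strictly smaller (resp.\ larger) than $w_j$ that lie to the left, resp.\ right, of position $j$; minimizing $m(3,w)$ is maximizing the displayed sum. The alphabet is peeled from the outside in. Let $u^{(0)}$ be the subword of $w$ formed by the occurrences of $0$ and $s-1$. A flip argument in the style of Lemma~\ref{l:1} should show $u^{(0)}$ is alternating, and a shift argument in the style of Lemma~\ref{l:2}, moving any stray $0$ or $s-1$ toward the middle, should show that the occurrences of $\{0,s-1\}$ form a block of consecutive central positions with $|n_0-n_{s-1}|\le1$. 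Deleting that block leaves a word over $\{1,\dots,s-2\}$ wrapped symmetrically around it, and a computation (cleanest in the word-limit) shows that the triples meeting the central block contribute to the objective either a constant or a reduced-weight term depending only on the distribution of the surrounding letters, so that up to lower-order corrections the outer part is governed by a functional of the same shape over the shrunken alphabet $\{1,\dots,s-2\}$. Induction on $s$ --- with Theorem~\ref{t:exact-3} as the base case $s=3$ --- then forces the full folded form, including the all-$\tfrac{s-1}2$ outer block when $s$ is odd, the median letter there playing the role that ``$1$'' plays in the base case. Palindromic symmetry of the extremal word, used throughout, should come from a reflection/averaging argument, the objective being reversal-invariant.

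The genuine obstacle is the classification step, and within it the pairing and the nesting order: showing that letter $i$ is matched with $s-1-i$ and that the pair $(0,s-1)$ is innermost. For $s=3$ there is nothing to choose, but for general $s$ one must exclude every alternative matching of letters into alternating blocks and every alternative nesting, and --- as the $s=3$ analysis already shows, where relocating a stray $1$ to an end strictly helps only because it \emph{rebalances} the left/right counts of the extreme letters, not because the $1$ itself becomes cheap --- a move that improves one pair in isolation may interact badly with the blocks around it, so no single clean exchange step is likely to settle it. The recursive decoupling in the word-limit is the device I would most rely on to control these interactions, but making it rigorous (in particular, first confining the $\{0,s-1\}$ mass to the centre, before the decoupling becomes available) is where the real difficulty lies; the fact that the paper can only pin down $f(s,3)$ to within $10^{-3}$ for $s\in\{4,5,6\}$ suggests this rigidity is precisely what is missing. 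Finally, it should be noted that flag algebra cannot prove the \emph{equality} in any case: $\min_x h_s(x)$ is an algebraic number of growing degree (already degree $4$ for $s=5$), while a semidefinite program over a fixed finite family of word graphs certifies only bounds living over that family's rationals.
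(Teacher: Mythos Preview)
The statement you are attempting is a \emph{conjecture} in the paper, not a theorem: the paper offers no proof of it for general $s$. The only case settled is $s=3$ (Theorem~\ref{t:exact-3}), and for $s\in\{4,5,6\}$ the paper supplies only numerical evidence via flag algebra, with a gap of order $10^{-3}$ remaining. So there is no ``paper's own proof'' to compare your proposal against.

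What you have written is not a proof but a research outline, and you are candid about this: phrases like ``should show'', ``should come from'', and your explicit identification of ``the genuine obstacle'' make clear you are sketching a strategy rather than executing one. The strategy itself --- peel the alphabet from the extremes inward, use local exchange moves to force alternation and centrality of each extreme pair, then recurse --- is the natural generalization of the $s=3$ argument, and is presumably what anyone attacking the conjecture would try first. Your diagnosis of where it breaks is also accurate: the pairing $i\leftrightarrow s-1-i$ and the nesting order are not forced by any single local move, and the interaction between layers prevents a clean induction. This is exactly why the conjecture remains open.

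One technical remark: the identity you write for $m(3,w)$ is not quite right as stated. The quantity $\binom{n}{3}-m(3,w)$ counts non-monotone triples, and a triple $i<j<k$ is non-monotone precisely when $w_j$ is a strict local extremum among $\{w_i,w_j,w_k\}$; summing over the middle position $j$ gives $\sum_j\bigl(L_<(j)R_>(j)+L_>(j)R_<(j)\bigr)$, not the expression with $L_<R_<+L_>R_>$ that you wrote. This does not affect the overall plan, but if you pursue the exchange arguments you will want the correct version.
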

Notice that for any fixed $s$ we can generate the coefficients of  $h_s(x_1,\ldots, x_{\lfloor (s-1)/2 \rfloor})$
in constant time. Hence, given $n$, we can approximate the r.h.s. of \eqref{e:polymin}
and the corresponding optimal solution vector $(x_1,\ldots, x_{\lfloor (s-1)/2 \rfloor})$ to precision
$1/n$ in polynomial time. Consequently we may construct an $n$-word $w^n$ in $F_s(x_1,\ldots, x_{\lfloor (s-1)/2 \rfloor})$ in polynomial time, hence the claim regarding the explicit construction of $S_s$ in the beginning of this section.

\subsection{$h_s$ for $s=3,4,5,6,7$ and the upper bounds of Theorem \ref{t:almost-exact}}\label{subsec:upper}
Here we exhibit $h_s$ for some small $s$ and compute the corresponding minimum in \eqref{e:polymin}.
It is immediate to see that $h_3(x_1)=\frac{3}{4}-\frac{3x_1}{2} + 3{x_1}^2 + 2{x_1}^3$ is exactly the polynomial in \eqref{e:h3} and the solution to \eqref{e:polymin} is $2-\sqrt{2}$, as we have proved in Theorem \ref{t:exact-3} that the asymptotic solution for $s(3,3)$ can be obtained by considering a folded form corresponding to
$x_1=\frac{\sqrt{2}-1}{2}$. Equivalently, the proof of Theorem \ref{t:exact-3} regrading $f(3,3)$ shows that
Conjecture \ref{conj:1} holds for $s=3$.

For $s=4$ we have that $h_4(x_1)$ is a univariate polynomial.
One may manually compute $h_4(x)$ by considering the number of monotone $3$-word palindromes of the form $w=12..1203..3021..21$ where the first $(1,2)$-alternating block is of size $\lfloor xn \rfloor$.
To do this, we consider all possible monotone $3$-words of $\Sigma_4$ (of which there are $36$,
where four of which do not appear in $w$; these are $013$, $310$, $023$, $320$).
For each such word, we compute its density in $w$. For example, the density of $133$ is $\frac{3}{2}x(\frac{1}{2}-x)^2+o_n(1)$. Summing the corresponding density for all $32$ monotone $3$-words of $\Sigma_4$ that appear in $w$ we obtain
$$
h_4(x) = 9x^2(\tfrac{1}{2}-x)+12x(\tfrac{1}{2}-x)^2+6x^3+\frac{3}{4}(1-2x)^3 = 3x^3 + \frac{3}{2}x^2-\frac{3}{2}x + \frac{3}{4}\;.
$$
We obtain that $h_4(x)$ is minimized at $x=\frac{\sqrt{7}-1}{6}$
at which it is equal to $\frac{37-7\sqrt{7}}{36} \approx 0.5133...$.

For $s=5$ we have that $h_5(x_1,x_2)$ is a bivariate polynomial of degree $3$ and one can manually compute
$h_5(x)$ by considering the number of monotone $3$-word palindromes of the form
$w=2..213..1304...4031..312..2$ where the first block of $2$'s is of size
$\lfloor xn \rfloor$ and the next $(1,3)$-alternating block is of size $\lfloor yn \rfloor$.
It is easy (though lengthy) to verify that:
$$
h_5(x,y) = 2x^3+6x^2y+3x^2+9y^2x-\frac{3}{2}x+3y^3+\frac{3}{2}y^2-\frac{3}{2}y+\frac{3}{4}\;.
$$
We have
\begin{align*}
\frac{\partial h_5(x,y)}{\partial x} & = -\frac{3}{2} + 6x^2 + 9y^2 + 6x + 12xy\;,\\
\frac{\partial h_5(x,y)}{\partial y} & = -\frac{3}{2} + 6x^2 + 3y + 18xy + 9y^2\;.
\end{align*}
Standard calculus gives that the minimum of $h_5(x,y)$ where $x \ge 0$, $y \ge 0$ and
$x+y \le \frac{1}{2}$ is obtained at $(x,y)=(0.124772..., 0.199708...)$ where $x=0.124772...$ is the (unique) positive root of $16t^4+64t^3+56t^2-1$ and $y=0.199708...=1-\frac{1}{1+2x}$ is obtained from $x$ by equating any of the partial derivatives above to zero. At this minimum point we have $h_5(x,y)=0.4610...$.

For $s=6$ we obtain
$$
h_6(x,y) = 3x^3+3y^3+6x^2y+9xy^2+\frac{3}{2}x^2+\frac{3}{2}y^2-\frac{3}{2}x-\frac{3}{2}y+\frac{3}{4}\;.   
$$
We have
\begin{align*}
	\frac{\partial h_6(x,y)}{\partial x} & = 9x^2+12xy+3x+9y^2-\frac{3}{2}\;,\\
	\frac{\partial h_6(x,y)}{\partial y} & = 6x^2+18xy+3y+9y^2-\frac{3}{2}\;.
\end{align*}
Standard calculus gives that the minimum of $h_6(x,y)$ where $x \ge 0$, $y \ge 0$ and
$x+y \le \frac{1}{2}$ is obtained at $(x,y)=(0.189186..., 0.163220...)$ where $x=0.189186...$ is the (unique) positive root of $46t^4+68t^3+24t^2-2t-1$ and $y=0.163220...=\frac{x^2+x}{1+2x}$ is obtained from $x$ by equating any of the partial derivatives above to zero. At this minimum point we have $h_6(x,y)=0.428809...$.

For $s=7$ we obtain
\begin{align*}
h_7(x,y,z) & = \frac{3}{4} - \frac{3}{2}x + 3 x^2 + 2 x^3 - \frac{3}{2} y + 6 x^2 y + \frac{3}{2} y^2 + 9 x y^2 + 3 y^3\\
&  ~~~- \frac{3}{2} z + 6 x^2 z + 12 x y z + 6 y^2 z + \frac{3}{2} z^2 + 9 x z^2 + 9 y z^2 + 3 z^3\;.   
\end{align*}
We have
\begin{align*}
	\frac{\partial h_7(x,y,z)}{\partial x} & = -\frac{3}{2}+6x^2+9y^2+12yz+9z^2+6x+12xy+12xz\;,\\
	\frac{\partial h_7(x,y,z)}{\partial y} & = -\frac{3}{2}+6x^2+3y+18xy+9y^2+12xz+12yz+9z^2\;,\\
	\frac{\partial h_7(x,y,z)}{\partial z} & = -\frac{3}{2}+6x^2+6y^2+3z+18yz+9z^2+12xy+18xz\;.
\end{align*}
Standard calculus gives that the minimum of $h_7(x,y,z)$ where $x \ge 0$, $y \ge 0$, $z \ge 0$ and
$x+y+z \le \frac{1}{2}$ is obtained at $(x,y,z)=(0.0887976..., 0.150811...,0.135436...)$ where $x=0.0887976...$ is the (unique) positive root of $256t^8 + 2560 t^7 + 9088 t^6 + 14080 t^5 + 9248 t^4 + 1888 t^3 + 56 t^2 - 16 t - 1$ and $y=0.150811...=\frac{2x}{2x+1}$, $z=0.135436...=\frac{y^2+2x}{2x+2y+1}$ are obtained from $x$ by equating the partial derivatives above to zero. At this minimum point we have $h_7(x,y,z)=0.403383...$\:.

\section{Flag algebra and lower bounds for $f(s,k)$}\label{sec:flag}

The flag algebra method, introduced in a seminal paper of Razborov \cite{razborov-2007}, has become a
widely used and astonishingly effective method, mostly for homomorphism density problems in extremal combinatorics.
See \cite{razborov-2013} for a survey of flag algebra applications. To use this method in our setting,
we first need to introduce the combinatorial objects that we work with.

For an $n$-word $w$ over a totally ordered alphabet, the {\em word graph of $w$} is an edge-colored
undirected complete graph with vertex set $[n]$ and whose edges are colored by the function
$c:\binom{[n]}{2} \rightarrow \{0,1,2\}$ for which:
$$
c(i,j)= \begin{cases}
	0 & \text{if $w_i < w_j$}\,,\\
	1 & \text{if $w_i > w_j$}\,,\\
	2 & \text{if $w_i = w_j$}\,.
\end{cases}
$$
An (unlabeled) edge-colored complete graph is a {\em word graph} if it is the word graph of some word.
Notice that not every edge-colored complete graph with colors $\{0,1,2\}$ is a word graph and notice that the subset of word graphs not using the color $2$ is isomorphic to the well-known class of {\em permutation graphs}.
It is immediate to see that a $k$-subword of a word $w$ is monotone if and only if the $k$-clique corresponding to the subword in the word graph of $w$ does not use both the colors $0$ and $1$; we call such cliques {\em monotone}.
Hence, in order to study $f(s,k)$, it is equivalent to study the density of monotone $k$-cliques of word graphs of words over $\Sigma_s$. It is more effective to study the latter as there are significantly fewer word graphs than words. Let ${\mathcal G}(s,n)$ denote the set of all word graphs that correspond to $n$-words over $\Sigma_s$ and let ${\mathcal G}(s)=\cup_{n}{\mathcal G}(s,n)$. For example, the four elements of ${\mathcal G}(2,3)$ are depicted in Figure \ref{fig:f23} and
it is easy to verify that $|{\mathcal G}(s,3)|=8$ for all $s \ge 3$.
\begin{figure}
	\includegraphics[scale=0.6,trim=-20 370 340 72, clip]{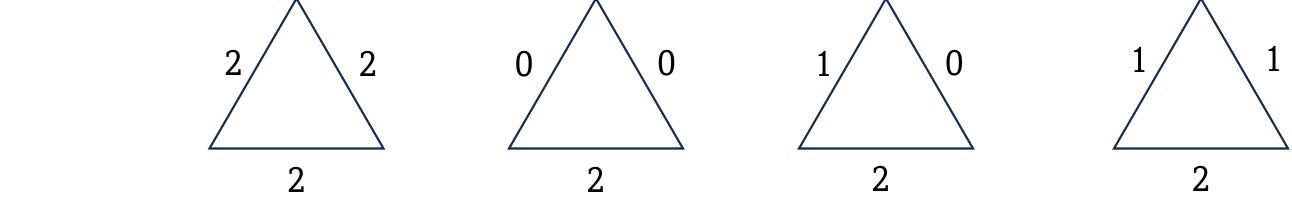}
	\caption{The elements of ${\mathcal G}(2,3)$.}
	\label{fig:f23}
\end{figure}

For two word graphs $H$ and $G$, let $P(H,G)$ denote the number of copies of $H$ in $G$ and let
$p(H,G)=P(H,G)/\binom{|V(G)|}{|V(H|}$ be the density of $H$ in $G$.
Let $m_k(G)$ denote the number of monotone $k$-cliques in $G$ and
notice that $m_k(G)$ equals $m(k,w)$ if $G$ is the word graph of $w$. 
Let $f_k(G)=m_k(G)/\binom{|V(G)|}{k}$ be the density of monotone $k$-cliques in $G$.
By double counting, we have for $G \in {\mathcal G}(s,n)$ that
\begin{equation}\label{e:dc}
	f_k(G) = \sum_{H \in {\mathcal G}(s,l)}f_k(H)p(H,G) ~~~\text{ for } k \le l \le n\;.
\end{equation}
 
We turn to our flag algebra notations and objects, which we describe in some brevity (yet in full), following an approach similar to \cite{BHL+-2015}.
The reader interested in more details may consult various surveys and gentle treatments to the subject, such as the one in \cite{GGHLM-2022}.

A {\em type} $\sigma$ is a bijectively vertex-labeled word graph from ${\mathcal G}(s,h)$ with label set $[h]$.
We call $h$ the {\em size} of $\sigma$ and,  for completeness, denote by $0$ the trivial empty type.
Notice that for all $s \ge 2$, there is a unique type of size $1$ and three distinct types of size $2$, corresponding to the three possible colorings of $K_2$ with colors $\{0,1,2\}$.

For a type $\sigma$ of size $h$, a {\em $\sigma$-flag} $F$ is a pair $(M, \theta)$ where $M$ is a word graph from ${\mathcal G}(s)$ and $\theta : [h] \rightarrow V(M)$ is an injective mapping such that $M[{\rm Im}(\theta)]$ is a copy of $\sigma$.
Two $\sigma$-flags $F=(M,\theta)$ and $F'=(M',\theta')$ are {\em flag-isomorphic} (denoted $F \cong F'$) if there is a graph isomorphism between $M$ and $M'$ (recall that $M$ and $M'$ are edge-colored so such an isomorphism should respect edge colors) which maps a labeled vertex in $F$ to a vertex with the same label in $F'$. Denote
by ${\mathcal F}_l^\sigma$ the set of $\sigma$-flags on $l$ vertices, up to flag isomorphism.
Note that ${\mathcal F}_l^0={\mathcal G}(s,l)$.

Given $\sigma$-flags $F \in {\mathcal F}_l^\sigma$ and $K=(M,\theta) \in {\mathcal F}_n^\sigma$, let
$p(F,K)$ be the probability that if we choose a random $l$-subset
$U$ of $V(M)$, subject to ${\rm Im}(\theta) \subseteq U$, then $(M[U], \theta) \cong F$.
(if $n < l$ or if $K$ is not a $\sigma$-flag, define $p(F,K)=0$).
Similarly, given flags $F,F' \in {\mathcal F}_l^\sigma$ and $K=(M,\theta) \in {\mathcal F}_n^\sigma$,
define the {\em joint density} $p(F,F';K)$ as the probability that if we choose two random $l$-subsets
$U,U'$ of $V(M)$, subject to $U \cap U' = {\rm Im}(\theta)$, then $(M[U], \theta) \cong F$ and $(M[U'], \theta) \cong F'$ (note: one can similarly define joint density when the flags $F,F'$ have different sizes, but we do not need this here; also, if $K$ is not a $\sigma$-flag or $n < 2l-|\sigma|$, define $p(F,F';K)=0$).
As an illustrative example of these notions, consider the type $\sigma$, and the $\sigma$-flags $F,F',K$ with the corresponding values of $p(F,K)$ and $p(F,F';K)$ given in Figure \ref{fig:flags}.

\begin{figure}
	\includegraphics[scale=0.59,trim=14 148 160 41, clip]{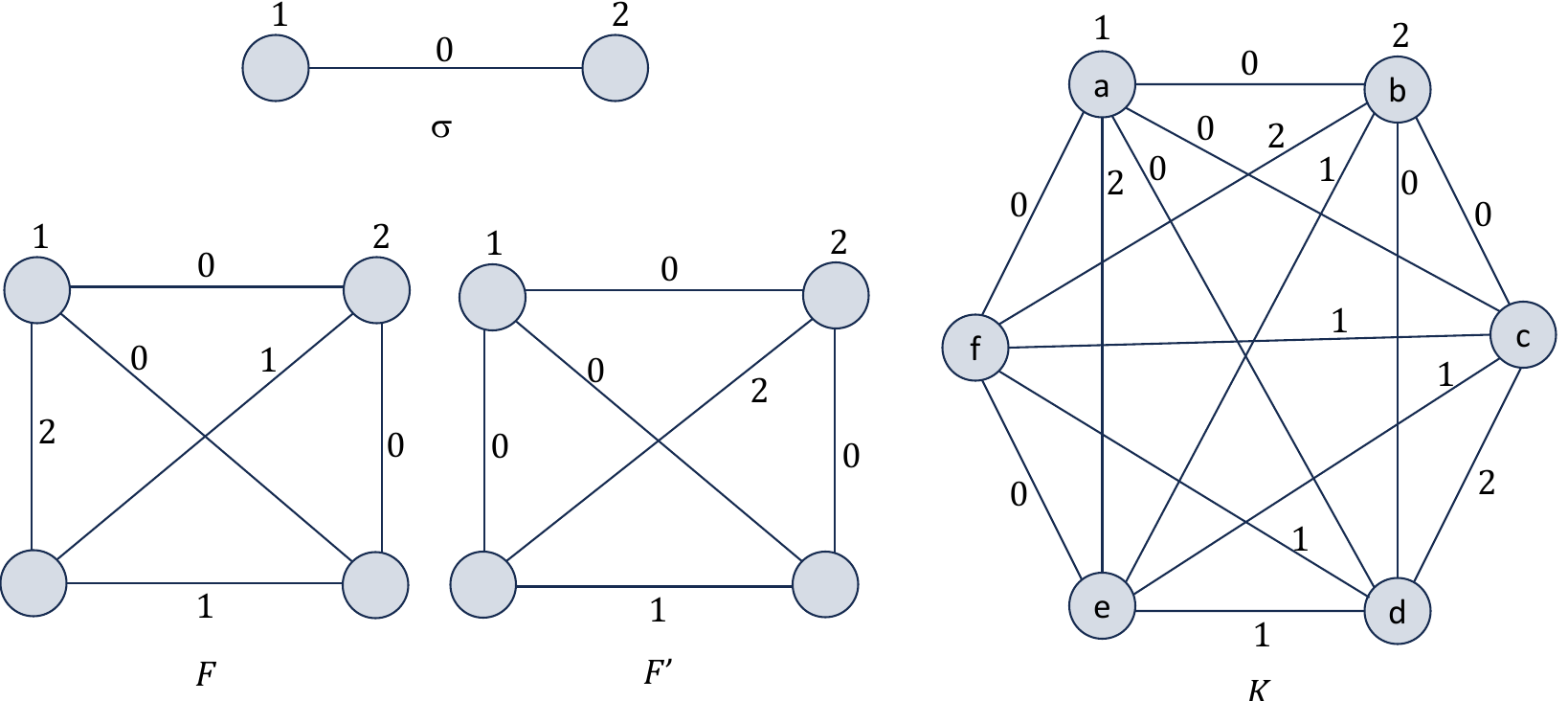}
	\caption{Depicted are a $\sigma$-type and three $\sigma$-flags $F,F' \in {\mathcal F}_4^\sigma$, $K=(M,\theta) \in {\mathcal F}_6^\sigma$ (notice that $M \in {\mathcal G}(3,6)$ as it is the word graph of, e.g., $012201$). It holds that $p(F,K)=\frac{2}{6}=\frac{1}{3}$ as only for the choices $U=\{a,b,d,e\}, \{a,b,c,e\}$ we have $(M[U],\theta) \cong F$. It also holds here that
	$p(F,F';K)=\frac{2}{6}$ as can be seen by the only possible choices $(U,U')=(\{a,b,d,e\},\{a,b,c,f\})$ and $(U,U')=(\{a,b,c,e\},\{a,b,d,f\})$ giving $(M[U], \theta) \cong F$ and $(M[U'], \theta) \cong F'$ .}
	\label{fig:flags}
\end{figure}

Now, suppose that ${\mathcal F}_{l^*}^\sigma = \{F_1,\ldots,F_t\}$ for some $l^* > |\sigma|$.
Let $Q$ be a $t \times t$ positive semidefinite matrix and let $K$ be some $\sigma$-flag with $n$ vertices.
It follows from positive-semidefiniteness and from Lemma 2.3 in \cite{razborov-2007} that:
$$
0 \le \sum_{i,j}Q[i,j]p(F_i,F_j;K)+o_n(1)\;.
$$
Notice that the last inequality holds also if $K$ is not a $\sigma$-flag as $p(F_i,F_j;K)=0$ in this case.
Let $\Theta(h,G)$ be the set of injective mappings from $[h]$ to $V(G)$ where $G \in {\mathcal G}(s,n)$. If we choose $\theta \in \Theta(|\sigma|,G)$ uniformly at random, then $K=(G,\theta)$ may or may not be a $\sigma$-flag.
In any case, we have from the last inequality and assuming that $l \ge 2l^*-|\sigma|$ that:
\begin{align*}
0 & \le \sum_{i,j}Q[i,j]{\mathbb E}_{\theta \in \Theta(|\sigma|,G)}[p(F_i,F_j;(G,\theta))]+o_n(1)\\
& = \sum_{H \in {\mathcal G}(s,l)}\left(\sum_{i,j}Q[i,j]{\mathbb E}_{\theta \in \Theta(|\sigma|,H)}[p(F_i,F_j;(H,\theta))]\right)p(H,G)+o_n(1)\;.
\end{align*}
Note that the coefficient of $p(H,G)$ is independent on $G$ as it only depends on
$\sigma,l^*,Q,H$ (recall that $s$ is fixed), so denote it by $c_H(\sigma,l^*,Q)$. We have
$$
	0 \le \sum_{H \in {\mathcal G}(s,l)}c_H(\sigma,l^*,Q)p(H,G)+o_n(1)\;.
$$
Let $(\sigma_i,l_i,Q_i)$ for $i \in [d]$ be a set of $d$ triples (where $l \ge 2l_i-|\sigma_i|$ for $1 \le i \le d$). For each triple, we consider the corresponding coefficient $c_H(\sigma_i,l_i,Q_i)$ and set $c_H = \sum_{i=1}^d c_H(\sigma_i,l_i,Q_i)$. Summing the last inequality for each triple we have:
$$
0 \le \sum_{H \in {\mathcal G}(s,l)}c_H \cdot p(H,G)+o_n(1)\;.
$$
The last inequality together with \eqref{e:dc} gives for $l \ge k$ that
$$
f_k(G) +o_n(1) \ge \sum_{H \in {\mathcal G}(s,l)}(f_k(H)-c_H)p(H,G) \ge \min_{H \in {\mathcal G}(s,l)}(f_k(H)-c_H)\;.
$$
We therefore obtain: 
\begin{corollary}\label{corr:flag}
	Let $s \ge 2$, $k \ge 3$ and $d \ge 1$ be integers. Let $l \ge k$ and let $(\sigma_i,l_i,Q_i)$ for
	$i \in [d]$
	be such that $\sigma_i$ is a type (whose underlying graph is from  ${\mathcal G}(s)$),
	$l_i > |\sigma_i|$ is an integer satisfying $l \ge 2l_i-|\sigma_i|$ and $Q_i$ is a positive semidefinite matrix indexed by ${\mathcal F}_{l_i}^{\sigma_i}$. Then,
	\[
	\pushQED{\qed}
	f(s,k) \ge \min_{H \in {\mathcal G}(s,l)}(f_k(H)-c_H)\;. \qedhere \popQED
	\] 
\end{corollary}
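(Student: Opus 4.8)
The plan is simply to collect the inequality chain built up in the paragraphs preceding the statement and then pass to the limit $n\to\infty$. Fix a large $n$ and an arbitrary word graph $G\in\mathcal{G}(s,n)$; the goal is to show that $f_k(G)\ge \min_{H\in\mathcal{G}(s,l)}(f_k(H)-c_H)-o_n(1)$. Since $f(s,k,n)=\min_{G\in\mathcal{G}(s,n)}f_k(G)$ (as $m_k(G)=m(k,w)$ when $G$ is the word graph of $w$) and $f(s,k)=\lim_{n\to\infty}f(s,k,n)$, taking the minimum over $G$ and then letting $n\to\infty$ would yield the claimed bound, noting that $\min_{H\in\mathcal{G}(s,l)}(f_k(H)-c_H)$ is a fixed constant because $\mathcal{G}(s,l)$ is a fixed finite set and each $c_H$ depends only on the fixed data $(\sigma_i,l_i,Q_i)_{i\in[d]}$ and on $H$.

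To bound $f_k(G)$ from below I would, for each fixed triple $(\sigma_i,l_i,Q_i)$, start from the semidefiniteness inequality already invoked above: writing $\mathcal{F}^{\sigma_i}_{l_i}=\{F^{(i)}_1,\ldots,F^{(i)}_{t_i}\}$, Lemma 2.3 of \cite{razborov-2007} together with $Q_i\succeq 0$ gives, for every $\sigma_i$-flag $K$ on $n$ vertices, $0\le \sum_{a,b}Q_i[a,b]\,p(F^{(i)}_a,F^{(i)}_b;K)+o_n(1)$, an inequality that also holds trivially (all the joint densities vanish) when $K$ is not a $\sigma_i$-flag. Averaging over a uniformly random injection $\theta\in\Theta(|\sigma_i|,G)$ and taking $K=(G,\theta)$ turns this into $0\le \sum_{a,b}Q_i[a,b]\,\mathbb{E}_\theta[p(F^{(i)}_a,F^{(i)}_b;(G,\theta))]+o_n(1)$. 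Using the hypothesis $l\ge 2l_i-|\sigma_i|$, I would expand each joint density by first selecting a uniformly random $l$-subset of $V(G)$ — the standard ``lift to a common larger size'' step — which rewrites the right-hand side as $\sum_{H\in\mathcal{G}(s,l)}c_H(\sigma_i,l_i,Q_i)\,p(H,G)+o_n(1)$, where $c_H(\sigma_i,l_i,Q_i)$ is a fixed rational number depending only on $\sigma_i,l_i,Q_i,H$ (and the fixed $s$), never on $G$ or $n$. Summing over $i\in[d]$ and setting $c_H=\sum_{i=1}^d c_H(\sigma_i,l_i,Q_i)$ gives $0\le\sum_{H\in\mathcal{G}(s,l)}c_H\,p(H,G)+o_n(1)$.

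Finally I would combine this with the double-counting identity \eqref{e:dc}, which for $l\ge k$ reads $f_k(G)=\sum_{H\in\mathcal{G}(s,l)}f_k(H)\,p(H,G)$. Subtracting yields $f_k(G)+o_n(1)\ge\sum_{H\in\mathcal{G}(s,l)}(f_k(H)-c_H)\,p(H,G)$, and since the numbers $p(H,G)$ are nonnegative and sum to $1$ over $H\in\mathcal{G}(s,l)$, the right-hand side is at least $\min_{H\in\mathcal{G}(s,l)}(f_k(H)-c_H)$. Passing to the limit as above then finishes the argument.

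The genuine content — the semidefiniteness inequality of \cite{razborov-2007} and the verification that lifting to size $l$ produces $G$-independent coefficients — has already been laid out in the preceding discussion, so the only remaining work is bookkeeping. The mild obstacle is precisely that: checking that the finitely many $o_n(1)$ terms (one per flag pair per triple, with $s,l,l_i,d$ all fixed) can be absorbed into a single $o_n(1)$, and confirming that the averaging over $\theta$ is compatible with the expansion to $l$-subsets — i.e., that $l\ge 2l_i-|\sigma_i|$ is exactly the condition under which a random $l$-set can simultaneously host the two overlapping $l_i$-sets defining the joint density $p(F^{(i)}_a,F^{(i)}_b;\cdot)$. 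No step poses a real difficulty.
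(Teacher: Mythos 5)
Your proposal is correct and follows essentially the same route as the paper: the corollary is exactly the chain of inequalities developed in the text preceding it (the semidefiniteness inequality from Lemma 2.3 of \cite{razborov-2007}, averaging over $\theta$, expansion over ${\mathcal G}(s,l)$ with $G$-independent coefficients $c_H$, and the double-counting identity \eqref{e:dc}), followed by taking the minimum over $G$ and letting $n\to\infty$. Your added bookkeeping about absorbing the finitely many $o_n(1)$ terms and the role of the condition $l \ge 2l_i - |\sigma_i|$ matches the paper's implicit treatment.
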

By Corollary \ref{corr:flag}, to obtain a lower bound for $f(s,k)$ we can choose $l \ge k$,
compute $f_k(H)$ for all $H \in {\mathcal G}(s,l)$, choose $d$ types $\sigma_i$ and $d$ sizes $l_i$ for
$i \in [d]$ and then solve a semidefinite program to
obtain solutions for the semidefinite matrices $Q_i$ for $i \in [d]$ that maximizes
$\min_{H \in {\mathcal G}(s,l)}(f_k(H)-c_H)$.
This turns out to be a lucrative avenue for $k=3$ and $s=4,5,6$ as detailed in the following subsection.

\subsection{The lower bounds of Theorem \ref{t:almost-exact}}\label{subsec:lb}

The results in this subsection reference a computer program which we call the ``generator program'' (link to code given in Table {\ref{table:urls}) and also reference Appendix \ref{appendix:A} which provides additional technical details.

Table \ref{table:gsl} lists the size of the word graph set ${\mathcal G}(s,l)$ for various $s,l$.
The generator program generates the sets ${\mathcal G}(s,l)$.
The values $|{\mathcal G}(s,8)|$ for $s \in \{5,6,7\}$ are still feasible when running the semidefinite program on a supercomputer but the added benefit in doing so, if any, is negligible (see Appendix A for more details).

\begin{table}[ht]
	\centering
	\begin{tabular}{c||c|c|c|c|c|c|c}
		&  $l=2$ & $l=3$ & $l=4$ & $l=5$ & $l=6$ & $l=7$ & $l=8$\\
		\hline
		$s=2$ & $3$ & $4$ & $10$ & $16$  & $36$   & $64$    & $136$  \\
		$s=3$ & $3$ & $8$ & $24$ & $76$  & $260$  & $848$   & $2760$ \\
		$s=4$ & $3$ & $8$ & $35$ & $146$ & $780$  & $3871$  & $18962$\\
		$s=5$ & $3$ & $8$ & $35$ & $179$ & $1248$ & $8978$  & $62394$\\
		$s=6$ & $3$ & $8$ & $35$ & $179$ & $1390$ & $12712$ & $119960$ \\
		$s=7$ & $3$ & $8$ & $35$ & $179$ & $1390$ & $13488$ & $155384$
	\end{tabular}
	\caption{The size of ${\mathcal G}(s,l)$ for various $s,l$.}
	\label{table:gsl} 
\end{table} 

The generator program computes $f_3(H)$ for every $H \in {\mathcal G}(s,l)$ which, recall, is the density of monotone triangles in $H$.

We shall use the types (valid for all $s \ge 3$) listed in Figure \ref{fig:types} as $\sigma_1,\ldots,\sigma_9$.
Hence, we have $d=9$ using the notation of the previous subsection.
\begin{figure}[ht]
	\includegraphics[scale=0.5,trim=10 380 20 10, clip]{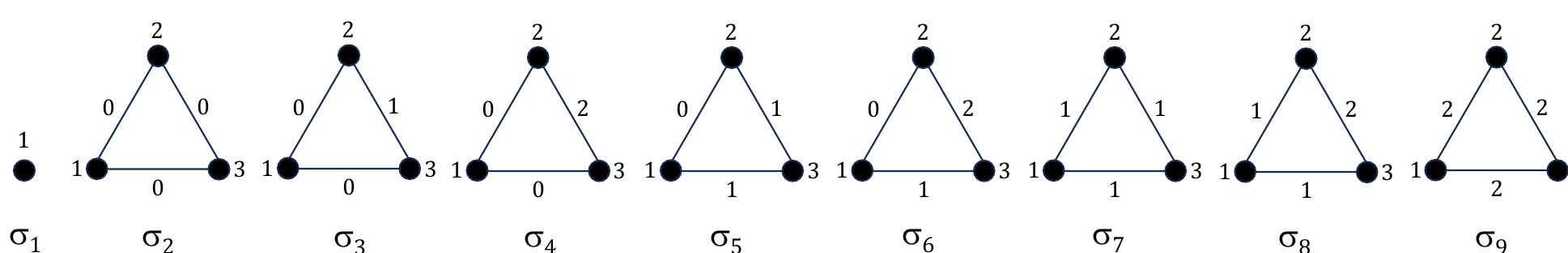}
	\caption{Types of size $3$ used in our program.}
	\label{fig:types}
\end{figure}

The generator program computes ${\mathcal F}_{l_i}^{\sigma_i}$. Specifically we shall use
$l_i=5$ for $2 \le i \le 9$ and use $l_1=4$.
Table \ref{table:flags} lists the sizes of these flag lists (note that some flag lists, hence their sizes, vary with the alphabet size $s$, as ${\mathcal G}(4,5) \subsetneq {\mathcal G}(5,5)={\mathcal G}(6,5) $).
\begin{table}[ht]
	\centering
	\begin{tabular}{c||c|c|c|c|c|c|c|c|c}
		&  $|{\mathcal F}_{4}^{\sigma_1}|$ &
		$|{\mathcal F}_{5}^{\sigma_2}|$ & $|{\mathcal F}_{5}^{\sigma_3}|$ & 
		$|{\mathcal F}_{5}^{\sigma_4}|$ & $|{\mathcal F}_{5}^{\sigma_5}|$ & 
		$|{\mathcal F}_{5}^{\sigma_6}|$ & $|{\mathcal F}_{5}^{\sigma_7}|$ & 
		$|{\mathcal F}_{5}^{\sigma_8}|$ & $|{\mathcal F}_{5}^{\sigma_9}|$ \\
		\hline
		$s=4$ & $80$ & $330$ & $305$ & $203$  & $305$   & $177$ & $330$ & $203$ & $110$ \\
		$s=5$ & $80$ & $402$ & $376$ & $203$  & $376$ & $177$ & $402$ & $203$ & $110$ \\
		$s=6$ & $80$ & $402$ & $376$ & $203$  & $376$ & $177$ & $402$ & $203$ & $110$
	\end{tabular}
	\caption{The size of ${\mathcal F}_{l_i}^{\sigma_i}$.}
	\label{table:flags} 
\end{table}

Finally, the generator program computes the last piece of data needed to generate the semidefinite program for a given $s \in \{4,5,6\}$, namely, for each pair of flags $F,F' \in {\mathcal F}_{l_i}^{\sigma_i}$ and for each
$H \in {\mathcal G}(s,l)$ it computes ${\mathbb E}_{\theta \in \Theta(|\sigma_i|,H)}[p(F,F';(H,\theta))]$,
where in our program we use $l=7$. Notice that $7$ is a valid choice for $l$ as recall that we must have 
$l \ge 2l_i - |\sigma_i|$.

Using the computed constants mentioned above, the generator program then creates for each $s \in \{4,5,6\}$, an input file in standard format (see Appendix \ref{appendix:A} for more details) that is fed to an sdp solver.
The solver gives for each $s \in \{4,5,6\}$ corresponding positive semidefinite matrices $Q_1,\ldots,Q_9$, for which we
obtain that
$\min_{H \in {\mathcal G}(4,7)}(f_3(H)-c_H) \approx 0.5123...$,
$\min_{H \in {\mathcal G}(5,7)}(f_3(H)-c_H) \approx 0.4604...$,
$\min_{H \in {\mathcal G}(6,7)}(f_3(H)-c_H) \approx 0.4280...$.
All of this bounds are proved rigorous using a rounding procedure elaborated upon in Appendix \ref{appendix:A}.
Hence, the lower bounds in Theorem \ref{t:almost-exact} hold as claimed.
Finally, note that together with the upper bounds given in Subsection \ref{subsec:upper}, Theorem \ref{t:almost-exact} is proved. \qed

\section{$f(s,k)$ versus $g(k)$}\label{sec:converge}

In this section we prove Theorem \ref{t:general}, which follows from the theorem and proposition below.
\begin{theorem}\label{t:general-lower}
	$f(s,k) \ge g(k)+\Theta(\frac{1}{s})$.
\end{theorem}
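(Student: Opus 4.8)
The goal is a lower bound of the form $f(s,k) \ge g(k) + c_k/s$ for some constant $c_k > 0$ depending only on $k$. The plan is to take a near-optimal $n$-word $w$ over $\Sigma_s$ (with $n$ large), and show that one can always find a letter value $\ell \in \Sigma_s$ that is used "a lot" — by pigeonhole, some letter is used on at least $n/s$ positions, and in fact, by a more careful averaging, one can find a value $\ell$ such that a positive fraction (bounded below in terms of $1/s$) of the positions carry letter $\ell$ and these positions are "spread out" enough to contribute monotone $k$-subwords beyond what a generic permutation would. The key point is that every pair of equal letters at positions $i<j$ forms a monotone $2$-subword "for free", and any such equal pair extends to monotone $k$-subwords in many ways; repeated letters are exactly the feature that distinguishes the bounded alphabet from the permutation case, and we want to quantify the surplus they force.

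Concretely, I would proceed as follows. First, fix the optimal (or within $o_n(1)$ of optimal) word $w \in (\Sigma_s)^n$ and let $n_\ell$ be the number of positions carrying letter $\ell$, so $\sum_\ell n_\ell = n$. Decompose the monotone $k$-subwords of $w$ into those that are "rainbow" (all $k$ letters distinct) and those containing at least one repeated letter. For the rainbow part, after contracting equal letters one is left essentially with a permutation-type structure, and the rainbow monotone $k$-subwords are counted by a quantity that is at least (roughly) $g(k)\binom{n}{k}$ minus a correction coming from the $k$-subsets that are forced to be non-rainbow — here one invokes $g(k)$ (equivalently $g(k,n)$, which is monotone in $n$ and has limit $g(k)$) applied to the permutation obtained by an arbitrary tie-breaking of $w$; every monotone $k$-subword of that permutation which happens to be rainbow in $w$ is a monotone $k$-subword of $w$. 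Second, for the non-rainbow part, one shows that the number of $k$-subsets of $[n]$ that contain two positions with equal letters is $\Theta\left(\binom{n}{k}\sum_\ell \binom{n_\ell}{2}/\binom{n}{2}\right)$, which by convexity of $x \mapsto \binom{x}{2}$ is at least $\Theta\left(\binom{n}{k}/s\right)$ since $\sum_\ell n_\ell = n$ forces $\sum_\ell \binom{n_\ell}{2} \ge s\binom{n/s}{2} = \Theta(n^2/s)$. The surplus comes from the fact that a random $k$-subset containing an equal pair is monotone with probability bounded below by an absolute positive constant (e.g. condition on the two equal positions $i<j$ and note that the remaining $k-2$ positions, once we restrict to those lying between $i$ and $j$ in value-adjacency to the common letter, give a monotone completion a constant fraction of the time — or, more cleanly, use the Erd\H{o}s–Szekeres-type observation that any word on a sub-alphabet of bounded size has monotone $k$-subword density bounded below by a positive constant). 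Combining: $f(s,k,n) \ge g(k,n) + \Omega(1/s) - o_n(1)$, and letting $n \to \infty$ yields the claim.

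The delicate step — and the one I expect to be the main obstacle — is making the two contributions (rainbow and non-rainbow) additive without double-counting or losing the $g(k)$ baseline. The naive worry is that forcing many equal pairs to exist could in principle let the word "save" on rainbow monotone $k$-subwords, so that the non-rainbow surplus merely compensates rather than adds. To handle this I would argue at the level of $k$-subsets: partition $\binom{[n]}{k}$ into the family $\mathcal{R}$ of rainbow $k$-subsets and the family $\mathcal{N}$ of non-rainbow ones; on $\mathcal{R}$, the induced structure is a genuine (partial) permutation pattern and the fraction of monotone ones is $\ge g(k) - o(1)$ by the permutation result applied to any linearization (here one must check that the monotone-pattern density of a permutation is still $\ge g(k,n) - o_n(1)$ after deleting $o(n)$... no — after deleting up to $\sum_\ell \binom{n_\ell}{2}$ pairs, which is $\Omega(n^2)$, so one cannot simply delete; instead one should observe that $|\mathcal{R}|$ may be much smaller than $\binom{n}{k}$, so the rainbow bound only gives $g(k)|\mathcal{R}|$, and the deficit $\binom{n}{k} - |\mathcal{R}| = |\mathcal{N}|$ must be made up by the non-rainbow surplus, which requires the non-rainbow monotone density to exceed $g(k)$, not merely be positive). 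Thus the real content is: (i) $|\mathcal{N}| = \Omega(\binom{n}{k}/s)$ by convexity, and (ii) conditioned on lying in $\mathcal{N}$, a $k$-subset is monotone with probability $\ge g(k) + \delta$ for some absolute $\delta > 0$ — and (ii) is where I would spend the most care, likely proving it by a direct combinatorial argument that an equal pair can always be extended to a monotone $k$-subword in strictly more ways than the generic rate, or alternatively by the cruder route of showing the monotone density on $\mathcal{N}$ is at least some absolute constant $c_0 > g(k)$ (which suffices, since $g(k) = (k-1)^{-(k-1)}$ is small and any reasonable lower bound on constrained monotone density beats it). One then gets $f(s,k,n) \ge \frac{|\mathcal{R}|}{\binom{n}{k}}g(k) + \frac{|\mathcal{N}|}{\binom{n}{k}}c_0 - o_n(1) = g(k) + \frac{|\mathcal{N}|}{\binom{n}{k}}(c_0 - g(k)) - o_n(1) \ge g(k) + \Omega(1/s)$, as desired.
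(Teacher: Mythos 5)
Your decomposition breaks at the first step: the claim that the monotone density restricted to the rainbow family $\mathcal{R}$ is at least $g(k)-o(1)$ is false. Take the extremal word for $s=3$, $k=3$ from the proof of Theorem \ref{t:exact-3}, namely $1^y0202\ldots 201^y$: every rainbow triple uses the letters $0,1,2$ once each, so to be monotone the letter $1$ would have to occupy the middle position, but all $1$'s sit at the two ends while all $0$'s and $2$'s sit in the central block; hence \emph{no} rainbow triple is monotone, and the rainbow-restricted density is $0<\tfrac14=g(3)$. Your justification only shows that monotone subsets of a linearization $\pi$ which happen to be rainbow are monotone in $w$; the permutation bound guarantees $\ge g(k,n)\binom{n}{k}$ monotone subsets of $\pi$ in total, but nothing prevents all of them from being non-rainbow, as indeed happens in this example. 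The correct use of a linearization (and the one the paper makes) is that \emph{every} monotone $k$-subset of a permutation respecting $w$, rainbow or not, is monotone in $w$; this gives the baseline $g(k)-o_n(1)$ over all $k$-subsets, and the $\Theta(1/s)$ surplus must then come from $k$-subsets that are monotone in $w$ but not in $\pi$, i.e.\ from monotone $k$-subwords of $w$ containing a repeated letter, for a suitable tie-breaking $\pi$.

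This exposes the second, more serious gap: your step (ii) — that the monotone density conditioned on $\mathcal{N}$ is at least an absolute constant — is precisely the hard content of the theorem, and your proposal only expresses the hope of proving it. For $k\ge 4$ an equal pair does not automatically extend to monotone $k$-subwords at a constant rate: if its two occurrences lie near opposite ends of the word with distinct letters in between, any monotone $k$-subset through the pair must avoid essentially all intermediate positions, so such pairs contribute almost nothing, and one must rule out that the (unknown) minimizer is built so that most equal pairs are of this useless kind. This is exactly what the paper's argument supplies: it splits into the case where some letter is very frequent (where constant-letter $k$-subwords alone give a $\Theta(1)$ surplus against a random respecting $\pi$) and the case where all letters are rare; in the latter it partitions positions into $q=(k-2)^2+1$ parts and values into $6q^2$ blocks, selects popular, pairwise non-adjacent value blocks, and applies Erd\H{o}s--Szekeres to get an increasing chain of blocks, which guarantees a $\Theta(1/s)$ fraction of $k$-subsets consisting of an equal pair plus a strictly increasing chain; these are monotone in $w$, while a random respecting $\pi$ inverts the equal pair half the time. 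Your skeleton could be repaired so that only the statement ``the minimizer has $\Omega(\binom{n}{k}/s)$ monotone $k$-subwords containing a repeated letter'' is needed (any positive constant density on $\mathcal{N}$ would then do, not $c_0>g(k)$), but no argument for that statement appears in your proposal, and it is where all the work lies.
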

\begin{proof}
	Throughout the proof we shall assume that $k=\Theta(1)$ is fixed and $s$ grows.
	Let $q=(k-2)^2+1$ and assume that $n$ is divisible by $6q^2$.
	Let $w$ be an $n$-word over $\Sigma_s$. We say that a permutation $\pi \in S_n$ {\em respects} $w$  if $w_i < w_j$ implies $\pi(i) < \pi(j)$. Let $R$ be the set of all permutations that respect $w$.
	Notice that any monotone $k$-subword of $\pi$ corresponds to a monotone $k$-subword of $w$ for any $\pi \in R$, but the converse does not necessarily hold. For example, the permutation $213$ respects $w=001$ but
	the former is not monotone. By the definition of $g(k)$ we have that the density of monotone $k$-subwords of any permutation $\pi$ (in particular, for $\pi \in R$) is at least $g(k)-o_n(1)$. 
	
	Assume first that $w$ contains a letter $\ell$ that appears at least $n/6q^2$ times.
	Consider some subword of $w$ that contains only the letter $\ell$. While it is trivially monotone in $w$,
	the corresponding word in $\pi$ for a randomly chosen $\pi \in R$ is monotone with probability only $2/k!$, so the density of monotone subwords in $w$ is at least $g(k)-o_n(1)+\Theta(1) \ge g(k)-o_n(1)+\Theta(\frac{1}{s})$.
	Hence, we may and will assume that each letter appears at most $n/6q^2$ times in $w$.
	
	Partition $[n]$ into $q$ consecutive parts $V_1,\ldots,V_q$, each of size $n/q$.
	For $\pi \in R$, let $Z_i$ be the set of images of $\pi$ in $V_i$ (so,, e.g., $Z_1=\{\pi(1),\ldots,\pi(n/q)\}$).
	Consider another partition of $[n]$ into $6q^2$ consecutive parts $W_1,\ldots,W_{6q^2}$, each of size
	$n/6q^2$. We say that $W_j$ is {\em popular} in $Z_i$ if $|W_j \cap Z_i| \ge n/12q^3$.
	Notice that since $|Z_i|=n/q$ and since $|W_j|=n/6q^2$, there are at least $3q$ $W_j$'s that are popular in $Z_i$. We now perform a process of selecting one popular $W_j$ for each $Z_i$ as follows.
	Let $p_1$ be an index such that $W_{p_1}$ is popular in $Z_1$. Assume that we have already selected
	$p_1,p_2,\ldots, p_{m-1}$. We select $p_m$ such that $W_{p_m}$ is popular in $Z_m$ and
	$\{p_m-1, p_m, p_m+1 \} \cap \{p_1,\ldots,p_{m-1}\} = \emptyset$. Notice that we can complete this selection process for all $Z_1,\ldots,Z_q$ since there are at least $3q$ popular $W_j$'s for each $Z_i$.
	
	Consider the sequence $p_1,p_2,\ldots, p_q$. It is a set of $q$ distinct elements of $[6q^2]$ (and, furthermore, no two elements of this sequence differ by $1$). As $q=(k-2)^2+1$, we have  by the theorem of Erd\H{o}s and Szekeres \cite{ES-1935}, that it contains a monotone subsequence of length $k-1$, say $p_{j_1},\ldots, p_{j_{k-1}}$ and assume, without loss of generality, that it is monotone increasing. Hence $p_{j_r} < p_{j_{r+1}}$ and recall also that in fact $p_{j_r}+1 < p_{j_{r+1}}$ for $r=1,\ldots,k-2$.
	
	Now consider a randomly chosen $k$-subword of $w$, say $u=w_{x_0}\ldots,w_{x_{k-1}}$.
	What is the probability of the event $A$ that $\pi(x_r) \in W_{p_{j_r}} \cap Z_{j_r}$ for $1 \le r \le k-1$ and that $\pi(x_0) \in W_{p_{j_1}} \cap Z_{j_1}$? It is clearly $\Theta(1)$ since $|W_{p_{j_r}} \cap Z_{j_r}| \ge n/12q^3 = \Theta(n)$ (note that this does not depend on $s$). What is the probability of the event $B \subseteq A$ that it further holds that $w_{x_0}=w_{x_1}$?  Since $|W_{p_{j_1}} \cap Z_{j_1}| \ge n/12q^3$ and there are only $s$ letters, we have that $B$ holds with probability $\Theta(\frac{1}{s})$.
	Hence, a $\Theta(\frac{1}{s})$ fraction of the $k$-subwords of $w$ satisfy event $B$.
	We claim that if $u$ satisfies $B$ then it is monotone in $w$. Indeed, 
	this holds since $w_{x_0}=w_{x_1}$ and since $\pi(x_1)\ldots\pi(x_{k-1})$ is monotone increasing in $\pi$.
	To see the latter notice that $\pi(x_r) \in W_{p_{j_r}}$ and $\pi(x_{r+1}) \in W_{p_{j_{r+1}}}$
	and recall that $p_{j_r}+1 < p_{j_{r+1}}$ and that no letter appears more than $n/6q^2 =|W_{p_{j_r}+1}|$ times in $w$.
	But now, suppose that we choose $\pi \in R$ at random. Since $w_{x_0}=w_{x_1}$, the probability that
	$\pi(x_0) < \pi(x_1)$ is $\frac{1}{2}$. Hence, there exists $\pi \in R$ for which at most half of the
	$k$-subwords of $w$ that satisfy $B$ (and recall that all of which are monotone in $w$) correspond to monotone
	words in $\pi$. Hence, a $\Theta(\frac{1}{s})$ fraction of the $k$-subwords of $w$ are monotone in $w$ but not monotone in $\pi$. Therefore,
	the density of monotone $k$-subwords in $w$ is at least $g(k)-o_n(1)+\Theta(\frac{1}{s})$.
\end{proof}

\begin{proposition}\label{prop:general-upper}
	 $f(s,k) \le g(k)+\frac{\binom{k}{2}}{s} + \Theta\left(\frac{1}{s^2} \right)$.
\end{proposition}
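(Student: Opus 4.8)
The plan is to build the desired word over $\Sigma_s$ by \emph{coarsening} a near-optimal permutation: cut the value set into $s$ equal intervals and replace each value by the index of the interval containing it. Concretely, I would fix a large $n$ divisible by $s$ and take $\pi\in S_n$ achieving the minimum density $g(k,n)$ of monotone $k$-subwords among permutations of $[n]$; by monotonicity of the sequence $g(k,\cdot)$ this density satisfies $g(k,n)\le g(k)$. Partition $[n]$ into consecutive blocks $B_0,\dots,B_{s-1}$ of size $n/s$ each, and define $w\in(\Sigma_s)^n$ by letting $w_i$ be the index $j$ with $\pi(i)\in B_j$. Since $f(s,k)=\lim_n f(s,k,n)$ and $f(s,k,n)\le m(k,w)/\binom{n}{k}$, it suffices to bound $m(k,w)$.

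Next I would relate the monotone $k$-subwords of $w$ to those of $\pi$ through two observations. First, every monotone $k$-subword of $\pi$ stays monotone in $w$: a strictly increasing (resp.\ decreasing) chain of values becomes weakly increasing (resp.\ decreasing) after coarsening. Second --- and this is the crux --- a $k$-subword on positions $x_1<\dots<x_k$ that is \emph{not} monotone in $\pi$ but \emph{is} monotone in $w$ must contain a \emph{collision pair}, i.e.\ two positions whose $\pi$-values lie in the same block. Indeed, since $\pi$ has distinct values, non-monotonicity yields an inversion $\pi(x_a)>\pi(x_b)$ with $a<b$; if $w$ is non-decreasing on the subword then $w_{x_a}\le w_{x_b}$, and since the blocks are intervals the two inequalities force $\pi(x_a),\pi(x_b)$ into the same block (and symmetrically, via a co-inversion, when $w$ is non-increasing). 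Hence $m(k,w)=m(k,\pi)+E$, where $E$ is at most the number of $k$-subsets of $[n]$ meeting some collision pair.

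The count is then immediate. Since $\pi$ is a bijection, each block absorbs exactly $n/s$ positions, so the number of collision pairs is $s\binom{n/s}{2}$, and every pair lies in $\binom{n-2}{k-2}$ many $k$-subsets; a union bound gives $E\le s\binom{n/s}{2}\binom{n-2}{k-2}$. Using $\binom{n-2}{k-2}/\binom{n}{k}=\binom{k}{2}/\binom{n}{2}$, this yields
\[
\frac{m(k,w)}{\binom{n}{k}}\ \le\ g(k,n)+\binom{k}{2}\,\frac{s\binom{n/s}{2}}{\binom{n}{2}}\ \le\ g(k)+\binom{k}{2}\,\frac{n-s}{s(n-1)}\,,
\]
and letting $n\to\infty$ through multiples of $s$ gives $f(s,k)\le g(k)+\binom{k}{2}/s$, which is at least as strong as the asserted inequality. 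To recover the precise second-order term one replaces the union bound by inclusion-exclusion: subsets carrying two or more collision pairs contribute only $O(1/s^2)$ to the density, which together with the exact value of $s\binom{n/s}{2}/\binom{n}{2}$ accounts for the displayed $\Theta(1/s^2)$ correction.

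I do not anticipate a real obstacle. The only genuinely substantive point is the collision-pair characterization, which is what pins the leading coefficient at exactly $\binom{k}{2}$: each newly created monotone subword must ``pay'' for at least one same-block pair, and there are only $\Theta(n^2/s)$ of those. The remaining care is purely bookkeeping --- restricting to $n$ divisible by $s$ and invoking $g(k,n)\le g(k)$ so that the limit passes cleanly.
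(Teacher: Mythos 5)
Your proposal is correct and takes essentially the same approach as the paper: coarsen a minimizing permutation by cutting the value range into $s$ equal intervals, observe that monotone subwords of $\pi$ stay monotone while any newly monotone $k$-subword must contain a same-block (collision) pair, and bound the density of $k$-sets containing such a pair. The only difference is bookkeeping: you use a union bound over the $s\binom{n/s}{2}$ collision pairs (plus an inclusion--exclusion remark), whereas the paper directly computes the birthday-type probability $1-\prod_{i=1}^{k-1}(1-i/s)=\binom{k}{2}/s+\Theta(1/s^2)$; both yield the stated bound.
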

\begin{proof}
	We shall assume that $s$ divides $n$ and construct an $n$-word with a small amount of monotone $k$-subwords.
	Let $\pi$ be a permutation of $[n]$ minimizing the number of monotone subsequences of length $k$.
	Let this number be $m(k,\pi)$, so by definition of $g(k)$ we have $m(k,\pi) = g(k)\binom{n}{k}(1+o_n(1))$.
	Let $V_\ell = \{\pi^{-1}(v)\;|\; \ell n/s < v \le (\ell+1)n/s\}$ for $0 \le \ell < s$. Construct an $n$- word $w$ over $\Sigma_s$ where $w_i=\ell$ if $i \in V_\ell$. We compare $m(k,w)$ to $m(k,\pi)$.
	Clearly, if a subsequence of $\pi$ is monotone, then the corresponding subsequence of $w$ is also monotone.
	The only way a non-monotone subsequence of $\pi$ can become monotone in $w$ is if the subsequence contains
	two locations $i$ and $j$ such that $i,j \in V_\ell$ for some $\ell$. The amount of such sequences is
	$$
	(1+o_n(1))\binom{n}{k}\left(1-\prod_{i=1}^{k-1}\left(1-\frac{i}{s}\right)\right) = (1+o_n(1))\binom{n}{k}\left(\frac{\binom{k}{2}}{s}+\Theta\left(\frac{1}{s^2} \right)\right)\;.
	$$
	It follows that $f(s,k) \le g(k)+\frac{\binom{k}{2}}{s} + \Theta\left(\frac{1}{s^2} \right)$.
\end{proof}

From Theorem \ref{t:general-lower} and Proposition \ref{prop:general-upper} we obtain
$f(s,k)-g(k)=\Theta(\frac{1}{s})$, proving Theorem \ref{t:general}. \qed

\newpage

\appendix

\section{Notes on the semidefinite programs}\label{appendix:A}

\subsection{Computational resources and limitations}

The state of the art numerical sdp solvers are CSDP \cite{borchers-1999} and SDPA \cite{YFK-2003}.
They are similar in their performance, accuracy, adhere to the same standard input format, and implement a primal-dual interior-point method. 
While efficient and accurate, being an interior-point method means, in particular, that they need to store and manipulate dense matrices whose orders are at least the number of constraints of the problem instance.
For example, CSDP, which we have used, requires at least $8m^2$ bytes of storage
\cite{borchers-1999}, where $m$ is the number of constraints, which, in
our problem, equals $|{\mathcal G}(s,l)|$. Constraint sizes of the order about $100000$ already require a supercomputer and constraint sizes in the millions are impractical using conventional hardware;
see \cite{LP-2021} for more details on hardware limitations when using CSDP.
To obtain meaningful lower bounds for $f(s,k)$ when $s \ge 7$ or $k \ge 4$ require using $|{\mathcal G}(s,l)|$
where $l \ge 8$ and even larger, which, by Table \ref{table:gsl} and the above discussion, becomes too computationally demanding.

Another obvious, yet crucial point, is that CSDP and SDPA use floating point arithmetic, and hence their results, while highly accurate, are, nevertheless, approximations. One then needs to apply some further rounding procedure
in order to turn the results into a rigorous proof. See Subsection \ref{subsec:rounding} for details on our rounding method of choice.

\subsection{Converting to standard sdp format}

CSDP solves the following standard form semidefinite program.
Let $C,A_1,\ldots,A_m$ be given real symmetric matrices and let $X$ be a (variable) real symmetric matrix.
Let $a=(a_1,\ldots,a_m)$ be a given real vector.
The semidefinite program solved by CSDP is:
\begin{equation}\label{e:csdp}
	\begin{aligned}
		\max \hspace{32pt} & \operatorname{tr}(C X) \\
		\text{subject to } & \operatorname{tr}(A_jX) = a_j & \text{for } 1 \le j \le m\;,\\
		& X \succeq 0
	\end{aligned}
\end{equation}
 (here $X \succeq 0$ means $X$ is positive semidefinite).

Recalling Corollary \ref{corr:flag} and the objects (i.e., types, flag lists, joint densities) of Subsection \ref{subsec:lb}, note that our semidefinite program, for fixed $s \in \{4,5,6\}$  is:
$$
\max_{Q_1,\ldots,Q_9} \min_{H \in {\mathcal G}(s,7)}(f_3(H)-c_H)
$$
subject to $Q_i \succeq 0$ and to $c_H = \sum_{i=1}^9 c_H(\sigma_i,l_i,Q_i)$.
Denoting ${\mathcal F}_{l_i}^{\sigma_i}=\{F^i_1,\ldots,F^i_{t_i}\}$ recall 
the definition of $c_H(\sigma_i,l_i,Q_i)$ given in Section \ref{sec:flag}:
$$
c_H(\sigma_i,l_i,Q_i) = \sum_{u=1}^{t_i}\sum_{v=1}^{t_i}
Q_i[u,v]{\mathbb E}_{\theta \in \Theta(|\sigma_i|,H)}[p(F^i_u,F^i_v;(H,\theta))]\;.
$$
Our program is therefore
$$
\max_{Q_1,\ldots,Q_9} \min_{H \in {\mathcal G}(s,7)}(f_3(H)-\sum_{i=1}^9 \sum_{u=1}^{t_i}\sum_{v=1}^{t_i}
Q_i[u,v]{\mathbb E}_{\theta \in \Theta(|\sigma_i|,H)}[p(F^i_u,F^i_v;(H,\theta))])
$$
subject to $Q_i \succeq 0$. Now, observe that if $M$ and $N_1,\ldots,N_9$ are positive integers then
the last program yields the same result as the program
$$
\frac{1}{M}\max_{Q_1,\ldots,Q_9} \min_{H \in {\mathcal G}(s,7)}(M \cdot f_3(H)-\sum_{i=1}^9 \sum_{u=1}^{t_i}\sum_{v=1}^{t_i}
Q_i[u,v]N_i{\mathbb E}_{\theta \in \Theta(|\sigma_i|,H)}[p(F^i_u,F^i_v;(H,\theta))])\;.
$$
The advantage of using the latter formulation is that we can choose $M$, $N_1,\ldots,N_9$ such that all constants in the
sdp are integers, which makes the input to the sdp more concise.
Since $f_3(H)$ is a rational with denominator $\binom{7}{3}$, we shall choose $M=35$.
Also notice that for $i=2,\ldots,9$, $p(F^i_u,F^i_v;(H,\theta))]$ is a rational with denominator
$1260$ (the number of choices for $\theta \in \Theta(|\sigma_i|,H)$ is $7\cdot 6 \cdot 5$ and the number of choices for a subset pair $(U,U')$ in the definition of joint density is $\binom{4}{2}$ in our case), so we choose $N_i=1260$. Analogously, 
$p(F^1_u,F^1_v;(H,\theta))]$ is a rational with denominator $140=7 \cdot \binom{6}{3}$ so we choose
$N_1=140$. We then execute the sdp and obtain by Corollary \ref{corr:flag} that
\begin{equation}\label{e:sdp}
35f(s,3) \ge \max_{Q_1,\ldots,Q_9} \min_{H \in {\mathcal G}(s,7)}(35f_3(H)-\sum_{i=1}^9 \sum_{u=1}^{t_i}\sum_{v=1}^{t_i}
Q_i[u,v]N_i{\mathbb E}_{\theta \in \Theta(|\sigma_i|,H)}[p(F^i_u,F^i_v;(H, \theta))])\;.
\end{equation}

Translating the sdp \eqref{e:sdp} to standard sdp format as in \eqref{e:csdp} is a straightforward process of adding slack variables. 
In our case $m=|{\mathcal G}(s,7)|$ is the dimension of the vector $a=(a_1,\ldots,a_m)$.
Now, suppose that ${\mathcal G}(s,7)=\{H_1,\ldots,H_m\}$ then $a_i=35f_3(H_i)$.
Each of the matrices $C,A_1,\ldots,A_m$ is a block diagonal matrix with precisely $10$ blocks, where
for $1 \le i \le 9$, the $i$'th block corresponds to the type $\sigma_i$ and the last block is the ``slack block''.
For $i=1,\ldots,9$, the order of block $i$ in each of these matrices is $|{\mathcal F}_{l_i}^{\sigma_i}|$,
namely it is the corresponding row in Table \ref{table:flags}. The dimension of the slack block is $m+1$.
As for their entries, the matrix $C$ is entirely zero except for the $[1,1]$ entry of the slack block which is $1$.
For $1 \le i \le 9$ and for $1 \le j \le m$, entry $[u,v]$ of block $i$ of $A_j$ is
$N_i{\mathbb E}_{\theta \in \Theta(|\sigma_i|,H_j)}[p(F^i_u,F^i_v;(H_j,\theta))]$.
The slack block of $A_j$ is entirely zero except for entries $[1,1]$ and $[j+1,j+1]$ which are $1$
(so the slack block is a diagonal matrix).
The input files referenced in Table \ref{table:urls} contain all of these values in standard
SDPA sparse format (see the manuals of either SDPA or CSDP for a description of this format, used by both programs).

\subsection{Rounding}\label{subsec:rounding}

As the output matrix of the sdp solver is an approximate floating point solution, one needs to couple the approximate result with a rounding argument in order to obtain a rigorous proof. There are several approaches addressing this task, broadly falling into two categories, depending on whether the solution of the sdp is claimed to be an exact result or a bound for the exact result.
In the ``exact'' case, one needs to convert the entries of the output matrix to rationals which can then
be certified, with exact arithmetic, to yield the claimed exact solution. These methods involve (sometimes ingenious \cite{grzesik-2012,HHKNR-2013}) ``guessing'' of the entires to close rationals, and are usually suited to problems containing a moderate amount of variables (by {\em variables} we mean the number of distinct entry positions of the resulting matrix that are potentially nonzero). In the ``bound'' case,
rounding is less stringent and hence suitable for sdp's with a large number of variables (see, e.g., \cite{ANS-2016,SS-2018}).
As in our problem we only claim a lower bound for $f(s,3)$, the rounding procedure we present is of the ``bound'' category; specifically, we will adopt to our setting the method from \cite{SS-2018}.

Let $X'$ denote the result matrix of the sdp execution.
Notice that $X'$ has the same block structure as the matrices $A_j$. Recalling that
block $i$ is square symmetric of order $|{\mathcal F}_{l_i}^{\sigma_i}|$ for $1 \le i \le 9$
and that the slack block is diagonal of order $m+1=|{\mathcal G}(s,7)| + 1$, we have that the number of variables in our problem is:
$$
	\sum_{i=1}^9 \binom{|{\mathcal F}_{l_i}^{\sigma_i}|+1}{2} + |{\mathcal G}(s,7)| + 1 = \begin{cases}
		272942 & \text{if $s=4$}\:,\\
		379247 & \text{if $s=5$}\:,\\
		382981 &  \text{if $s=6$}\:.
	\end{cases} 
$$

Let $\delta > 0$ be a parameter, which can be configured by CSDP and is taken by default to be $10^{-8}$
\cite{borchers-1999}. CSDP guarantees \cite{borchers-1999} that $X'$ satisfies:
\begin{equation}\label{e:guarantee}
	\begin{aligned}
		|\operatorname{tr}(A_jX') - a_j| & \le \delta & \text{for } 1 \le j \le m \;,\\
		X' & \succeq 0\;.
	\end{aligned}
\end{equation}
Nevertheless, as $X'$ consists of floating point numbers, we need to convert them to rationals in order to certify a rigorous proof, without incurring significant loss in the obtained bound.
Since $X' \succeq 0$, we use python numpy (which uses BLAS/LAPACK \cite{anderson-1999})  to compute the
Cholesky decomposition of $X'$, so $L'L'^T=X'$ where $L'$ is lower triangular. Nevertheless, the computed $L'$ is still a floating point approximation (since $X'$ is an approximation and since the Cholesky decomposition routine may further incur additional loss). Let $D$ be a large integer (specifically, we use $D=10^6$).
We multiply each entry of $L'$ by $D$ and round each resulting entry to the closest integer, thus we obtain
an integer matrix $L$. We expect that $\frac{1}{D^2}LL^T$ is a good approximation of the exact result $X$
of our sdp. The matrix $L$ computed by our python script (see Table \ref{table:urls}) is our certificate and is provided by the links in Table \ref{table:urls} as well.

Our final task is to verify that $L$ produces a bound close to optimal. Let $M=LL^T$. We first verify that $L$ is indeed the Cholesky decomposition of $M$. Since $L$ is lower triangular, this means that we just need
to verify that all diagonal entries of $L$ are positive. Indeed, as our script shows, this holds
(for $s=4,5,6$ the lowest diagonal entry of the corresponding $L$ is respectively, $31,15,4$).

Let $b_j = \operatorname{tr}(\frac{1}{D^2}A_jM-a_j)$. Since $A_j,M,a_j$ are all integral, we have that $b_j$
is a rational with denominator $D^2$. Let $\varepsilon = \max_{j=1}^m|b_j|$. Our program shows that in all cases
$s=4,5,6$ the obtained (rational) $\varepsilon$ is smaller than $0.0002$.

%While this is not crucial (as $X' \succeq 0$ is guaranteed), it is worth mentioning that in all our cases (i.e. $s=4,5,6$) the obtained
%$X'$ is, in fact, positive definite, as can be reproduced by running the python script
%referenced in Table \ref{table:urls}. In all cases, the smallest eigenvalue of $X'$ is at least $10^{-13} > 0$
%(note: the computed eigenvalue is {\em highly} accurate; since $X'$, being real and symmetric, is well-conditioned,
%the algorithm employed by our script uses the numpy/LAPACK implementation of the QR eigenvalue algorithm, which is completely accurate up to the machine's epsilon (which is $2^{-52}$ for double precision floating point) for real symmetric matrices \cite{anderson-1999}).

Notice that $\operatorname{tr}(\frac{1}{D^2}CM)$ is a lower bound for the {\em precise solution} of the problem
\begin{equation}\label{e:csdp-precise}
	\begin{aligned}
		\max \hspace{32pt} & \operatorname{tr}(CX) \\
		\text{subject to } & \operatorname{tr}(A_jX) = a_j+b_j & \text{for } 1 \le j \le m\;,\\
		& X \succeq 0\;.
	\end{aligned}
\end{equation}
Recalling that $a_j=35f_3(H_j)$ we have, in turn, that the precise solution of \eqref{e:csdp-precise} is the precise solution of 
$$
	\max_{Q_1,\ldots,Q_9} \min_{H_j \in {\mathcal G}(s,7)}(35f_3(H_j)+b_j-\sum_{i=1}^9 \sum_{u=1}^{t_i}\sum_{v=1}^{t_i}
	Q_i[u,v]N_i{\mathbb E}_{\theta \in \Theta(|\sigma_i|,H)}[p(F^i_u,F^i_v;(H, \theta))])\;.
$$
Hence, if $Q_1,\ldots,Q_9$ denote the non-slack blocks of $\frac{1}{D^2}M$, we have that
$$
\operatorname{tr}(\frac{1}{D^2}CM) \le \min_{H_j \in {\mathcal G}(s,7)}(35f_3(H_j)+b_j-\sum_{i=1}^9 \sum_{u=1}^{t_i}\sum_{v=1}^{t_i}
Q_i[u,v]N_i{\mathbb E}_{\theta \in \Theta(|\sigma_i|,H)}[p(F^i_u,F^i_v;(H, \theta))])\;.
$$
Since $|b_j| \le \varepsilon$, we have that
$$
\operatorname{tr}(\frac{1}{D^2}CM) - \varepsilon \le \min_{H_j \in {\mathcal G}(s,7)}(35f_3(H_j)-\sum_{i=1}^9 \sum_{u=1}^{t_i}\sum_{v=1}^{t_i}
Q_i[u,v]N_i{\mathbb E}_{\theta \in \Theta(|\sigma_i|,H)}[p(F^i_u,F^i_v;(H, \theta))])\;.
$$
Hence, by \eqref{e:sdp}, $\operatorname{tr}(\frac{1}{D^2}CM)-\varepsilon \le 35f(s,3)$.
Finally, running our python script referenced in Table \ref{table:urls}, or just examining entry
$[1,1]$ of the slack block of $M$ (recall that the only nonzero entry of $C$ is the $[1,1]$ entry of the slack block, which equals $1$), we have
$$
\operatorname{tr}(CM) = \begin{cases}
	17931108816196 & \text{if $s=4$}\:,\\
	16117334329600  & \text{if $s=5$}\:,\\
	14982659113536  &  \text{if $s=6$}\:.
\end{cases} 
$$
Recalling that $\varepsilon < 0.0002$ and $D=10^6$, the lower bounds in Theorem \ref{t:almost-exact} hold.
\begin{table}[ht]
	\centering
	\begin{tabular}{l|l}
		\hline
		Generator program & \footnotesize{\url{github.com/raphaelyuster/monotone-words/blob/main/sdp-generator.cpp}} \\
		SDP rounding script & \footnotesize{\url{github.com/raphaelyuster/monotone-words/blob/main/sdp_analyzer.py}} \\
		SDP input, $s=4$ & \footnotesize{\url{raw.githubusercontent.com/raphaelyuster/monotone-words/main/words4.dat-s}} \\
		SDP input, $s=5$ & \footnotesize{\url{raw.githubusercontent.com/raphaelyuster/monotone-words/main/words5.dat-s}} \\
		SDP input, $s=6$ & \footnotesize{\url{raw.githubusercontent.com/raphaelyuster/monotone-words/main/words6.dat-s}} \\
		Certificate, $s=4$ & \footnotesize{\url{github.com/raphaelyuster/monotone-words/blob/main/words4.cert}} \\
		Certificate, $s=5$ & \footnotesize{\url{github.com/raphaelyuster/monotone-words/blob/main/words5.cert}} \\
		Certificate, $s=6$ & \footnotesize{\url{github.com/raphaelyuster/monotone-words/blob/main/words6.cert}} \\
		Script output, $s=4$ & \footnotesize{\url{github.com/raphaelyuster/monotone-words/blob/main/words4.txt}} \\
		Script output, $s=5$ & \footnotesize{\url{github.com/raphaelyuster/monotone-words/blob/main/words5.txt}} \\
		Script output, $s=6$ & \footnotesize{\url{github.com/raphaelyuster/monotone-words/blob/main/words6.txt}} \\
		\hline
	\end{tabular}
	\caption{Description of each certificate and its corresponding url.}
	\label{table:urls} 
\end{table} 
\end{document}